\DeclareMathOperator{\Ima}{Im}
\DeclareMathOperator{\dv}{div}
\newcommand{\Rr}{\mathbb{R}}
\newcommand{\Cc}{\mathbb{C}}
\newtheorem{thm}{Theorem}[section]
\newtheorem{prop}[thm]{Proposition}
\newtheorem{defi}[thm]{Definition}
\newtheorem{rema}[thm]{Remark}
\newtheorem{coro}[thm]{Corollary}
\newtheorem{lem}[thm]{Lemma}
\newtheorem{exm}[thm]{Example}
\let\c@equation\c@thm
\numberwithin{equation}{section}
\title{Contact resolutions of singular Jacobi structures}
\author{Hichem Lassoued$^{1}$, Camille Laurent-Gengoux$^{2}$}
\address{\emph{1}, Nantes Université, 44000 Nantes, France.}
\address{\emph{2}, Université de Lorraine, CNRS, IECL, F-57000 Metz, France  }
\date{June 11th, 2023}
\begin{document}

\begin{abstract}
  We study contact resolutions of Jacobi structures which are contact on an open subset. We give several classes of examples, as well as classes for which it cannot exist.  
\end{abstract}

\maketitle

\tableofcontents

\section{Introduction} 

Jacobi manifolds are to Poisson manifolds what contact manifolds are to symplectic manifolds. In particular, non-degenerate Jacobi manifolds are exactly contact manifolds.

Their rich local structure makes the study of Jacobi manifolds difficult: for example, there is a singular foliation induced by a Jacobi structure, whose leaves can be either contact manifolds or conformally symplectic manifolds. Thanks to two operations called "Poissonnification" or "Symplectification" \cite{DCW}, many works on Jacobi or contact manifolds simply consist in reducing to works relating to symplectic or homogeneous Poisson manifolds.

  In 2019, \cite{articl} introduced the notion of symplectic resolution of a Poisson structure (symplectic on a dense open set). We have shown that the Poisson structure $xd_xd_y$ on $\Rr^2$ or $\Cc^2$ does not admit a proper symplectic resolution. We have deduced that such a symplectic resolution can exist only in the case where the singular locus of the Poisson structure does not contain any manifold of codimension $1$.
We have also constructed in  \cite{c1234} a symplectic resolution of the Poisson structure $(x^2+y^2)d_xd_y$ on $\Rr^2$, whose singular locus has codimension $2$.

\noindent Here, we extend this work to the Jacobi-Contact framework.

\noindent The notion of contact realization was introduced by Claude Albert \cite{slvj}: a \emph{contact realization} of a Jacobi manifold $(M,\pi_M,E_M)$ and a triple $(\Sigma,\alpha,\varphi)$ where $(\Sigma,\alpha)$ is a contact manifold and $\varphi: \Sigma \rightarrow M$ a surjective submersion on $(M,\pi_M,E_M)$, which is also assumed to be a Jacobi morphism.

It is always possible to find a contact realization of dimension $2n+1$ of a Jacobi manifold of dimension $n$ see \cite{slvj}. It is obviously impossible to find a contact realization of dimension $2n+1$ of a Jacobi manifold of dimension $2n+1$, unless the latter is a contact manifold. But it is possible to weaken the notion of realization by only imposing that $\varphi$ be surjective. More exactly, one defines contact resolutions as follows:
\begin{defi}
A contact resolution of a Jacobi manifold $(M,\pi_M,E_M)$ of dimension $n$ is a triple $(\Sigma,\alpha,\varphi)$ where $(\Sigma,\alpha)$ is a contact manifold of dimension $n$, and $\varphi$ is a surjective Jacobi morphism.
\end{defi}

\noindent A contact resolution is said to be \emph{proper} when $\varphi$ is a proper map, and the resolution is said to be \emph{semi-connected} if no connected component of $\Sigma$ by $\varphi$ has its image included in the singular locus of the Jacobi structure $(\pi_M,E_M)$.
  We will see that, if we suppose $\Sigma$ separable, only Jacobi manifolds which are of contact on a dense open set can admit a contact resolution are the Jacobi manifolds.

\noindent We will give in the first section examples of singular Jacobi manifolds which admit contact resolutions, which shows that the theory is not empty for $\pi=(x^4+y^4)\frac {\partial}{\partial x}\wedge\frac{\partial}{\partial y}+x\frac{\partial}{\partial z}\wedge\frac{\partial}{\partial x}-y \frac{\partial}{\partial y}\wedge\frac{\partial}{\partial z}$ and $E=2d_z$ on $\Rr^3$. Then, in the second section, we show that some Jacobi manifolds do not admit contact resolution, although they are contact over a dense open set, which we show to be a necessary condition for their existence.

\noindent The main tools to show these results are the two ways of defining a Poisson structure from a Jacobi structure. There is the already mentioned notion of Poissonification of Jacobi structures, which is a way of changing a Jacobi manifold $(M,\pi_M,E_M)$ into an homogeneous Poisson manifold $(P,\pi_{P}, Z_{P})$ with $P=M\times\Rr$. Another way is, on a Jacobi manifold $(M,\pi_M,E_M)$ such that the vector field $E_M$ is non-degenerate, to define an homogeneous Poisson structure on a submanifold of codimension $1$ transverse to $E_M$.
We can define a contact resolution of a Jacobi manifold from a symplectic resolution of its Poissonification.
Conversely, we can use the second construction to demonstrate the non-existence of contact resolution in certain situations.

\noindent {\textbf{ Conventions:}}
Throughout this article, we denote by $(M,\Pi_M,E_M)$ a Jacobi structure on a smooth, real analytic or complex manifold $M$, where $\pi_M$ is a bivector field and $E_M $ a vector field. As we will not in general consider two Jacobi structures on the same manifold, this notation is not ambiguous.

\noindent It will often be convenient to view a contact structure on a manifold $\Sigma$ of dimension $2n+1$ as a Jacobi structure $(\Pi_{\Sigma},E_{\Sigma})$ where $\ Pi_{\Sigma}$ is a bivector field and $E_\Sigma$ is a vector field such that the exterior product $E_{\Sigma}\wedge\Pi_{\Sigma}^n$ is not zero at any point of $\Sigma$.
For the equivalence with the usual definition, see \cite{ccc3}.

\noindent Consider a Jacobi structure $(\pi_M,E_M)$ which is in contact over a dense open set.
A point where $E_{M}\wedge\Pi_{M}^n$ is non-zero will be said to be \emph{regular}. It will be said \emph{singular} otherwise. We denote by $\mathbf{M}_{sing}$ the set of singular points of the Jacobi structure $\pi_M$ and $E_M$. We must pay attention here to the vocabulary: for $(M,\pi_M,E_M)$ a Jacobi manifold, it is logical to define $\mathbf{M}_{sing}$ as the set of points where the foliation induces is singular.
For the Jacobi manifolds which are of contact on a dense open set, the two notions coincide.
\begin{rema} \normalfont
On a manifold of dimension $3$, we can have a point $m\in M$ such that $\pi_m\neq 0$, $E_m\neq 0$ and $\pi_m\wedge E_m=0$, i.e. is singular for the Jacobi structure.
For a Jacobi structure $(M,\pi_M,E_M)$ in dimension $3$ such that $E_M \notin \pi_M^{\#}(T^*M)$ on a dense open set, singular points are points $m\in M$ where at least one of the two conditions $\pi_{M\mid_m}=0$ and $E_{M\mid_m}\in\pi_{M\mid_m}^ \#(T_m^*M)$ is checked.
\end{rema}

\section{Existence of contact resolutions in dimension 3}

Any Jacobi structure on $\mathbb R^3$ is of the form:
\begin{equation}\label{yostér}\pi_M=f_z\frac{\partial}{\partial x}\wedge\frac{\partial}{\partial y}+f_y\frac{\partial}{\partial z }\wedge\frac{\partial}{\partial x}+f_x\frac{\partial}{\partial y}\wedge\frac{\partial}{\partial z}, \quad E_M=g_x\frac{\partial}{\partial x}+g_y\frac{\partial}{\partial y}+g_z\frac{\partial}{\partial z},\end{equation}
where $x, y, z$ are the canonical coordinates on $M$, and $f_z,f_y,f_x,g_x,g_y,g_z$ are smooth functions on $M=\mathbb R^3$.

\noindent We show that $E_M$ and $\pi_M$ define a Jacobi structure on $M$ if and only if,

\begin{enumerate}
\item $f_x(\frac{\partial f_y}{\partial z}-\frac{\partial f_z}{\partial y}-g_x)+f_y(\frac{\partial f_z}{\partial x}-\ frac{\partial f_x}{\partial z}-g_y)+f_z(\frac{\partial f_x}{\partial y}-\frac{\partial f_y}{\partial x}-g_z)=0$ by using the hypothesis $\left[\pi_M,\pi_M\right]=2E_M\wedge\pi_M$.

\noindent In particular if
$g_x=\frac{\partial f_y}{\partial z}-\frac{\partial f_z}{\partial y}$, $g_y=\frac{\partial f_z}{\partial x}-\frac{\partial f_x}{\partial z}$ and $g_z=\frac{\partial f_x}{\partial y}-\frac{\partial f_y}{\partial x}$, that is, when $2 E_M=-\dv(\pi_M)$, then the first condition is satisfied.
\item[ ] Note that we can verify that for any bivector field $\pi_M$ on $\mathbb R^3$, we have $${[\pi_M,\pi_M]} =\dv(\pi_M) )\wedge\pi_M$$ and
   $$ [\pi_M,\dv(\pi_M) ] = \dv( \dv(\pi_M)\wedge\pi_M )=-
   \frac{1}{2}\dv([\pi_M,\pi_M] ).$$
   In particular, if $[\pi_M,\pi_M] $ is a $3$-vector that never vanishes, thus defining a volume form, and if the divergence $\dv$ is calculated with respect to this volume form, then $ (\pi_M,\dv(\pi_M))$ is a Jacobi structure.

\item The following relations are satisfied

\[
\left\{
\begin{array}{c @{=} c}
  f_z\left(\frac{\partial g_x}{\partial x}+\frac{\partial g_y}{\partial y}\right)-f_y\frac{\partial g_y}{\partial z}-f_x\frac {\partial g_x}{\partial z}+g_x\frac{\partial f_z}{\partial x}+g_y\frac{\partial f_z}{\partial y}+g_z\frac{\partial f_z}{\partial z}&0\\

f_x\left(\frac{\partial g_z}{\partial z}+\frac{\partial g_y}{\partial y}\right)-f_z\frac{\partial g_z}{\partial x}-f_y\frac {\partial g_y}{\partial x}+g_x\frac{\partial f_x}{\partial x}+g_y\frac{\partial f_x}{\partial y}+g_z\frac{\partial f_x}{\partial z}&0\\

  f_y\left(\frac{\partial g_x}{\partial x}+\frac{\partial g_z}{\partial z}\right)-f_z\frac{\partial g_z}{\partial y}-f_x\frac {\partial g_x}{\partial y}+g_x\frac{\partial f_y}{\partial x}+g_y\frac{\partial f_y}{\partial y}+g_z\frac{\partial f_y}{\partial z}&0
\end{array}
\right.
\]
\noindent by using the assumption $\left[E_M,\pi_M\right]=0$.
\end{enumerate}

\noindent Suppose that the Jacobi manifold $(M,\pi_M,E_M)$ described by (\ref{yostér}) admits a contact resolution $(\Sigma,\alpha,\varphi)$. Since $M$ is of dimension $3$, the map $\varphi$ is of the form $\varphi = (u, v, w)$, where $u$, $v$ and $w$ are smooth functions on $\Sigma$ with values in $\Rr$. For any choice $(p_1,p_2,p_3)$ of local coordinates on $(\Sigma,\alpha)$, the functions $u,v$ and $w$ are functions of the variables $p_1,p_2$ and $p_3 $.

\begin{prop}\label{hmd}
Let $\Sigma$ be a differential manifold of dimension $3$, $\alpha$ a contact structure on $\Sigma$, and $\varphi:\Sigma\rightarrow M$ a smooth surjective map, the triple $(\Sigma, \alpha,\varphi)$ is a contact resolution of the Jacobi manifold $(M,\pi,E)$ described by (\ref{yostér}) if and only if the following six relations hold:

\begin{enumerate}

\item [1.z] \label{1.z}\begin{eqnarray*}
f_z(u,v,w) & =  \{p_1,p_2\}_\Sigma  \left(\frac{\partial u}{\partial p_1}\frac{\partial v}{\partial p_2}-\frac{\partial u}{\partial p_2}\frac{\partial v}{\partial p_1}\right)+
\{p_2,p_3\}_\Sigma  \left(\frac{\partial u}{\partial p_2}\frac{\partial v}{\partial p_3}-\frac{\partial u}{\partial p_3}\frac{\partial v}{\partial p_2}\right)\\ 
& + \{p_3,p_1\}_\Sigma  \left(\frac{\partial u}{\partial p_3}\frac{\partial v}{\partial p_1}-\frac{\partial u}{\partial p_1}\frac{\partial v}{\partial p_3}\right),
\end{eqnarray*}

\item [1.y] \begin{eqnarray*}
f_y(u,v,w) & = \{p_1,p_2\}_\Sigma  \left(\frac{\partial w}{\partial p_1}\frac{\partial u}{\partial p_2}-\frac{\partial w}{\partial p_2}\frac{\partial u}{\partial p_1}\right)+
\{p_2,p_3\}_\Sigma  \left(\frac{\partial w}{\partial p_2}\frac{\partial u}{\partial p_3}-\frac{\partial w}{\partial p_3}\frac{\partial u}{\partial p_2}\right)\\
& + \{p_3,p_1\}_\Sigma  \left(\frac{\partial w}{\partial p_3}\frac{\partial u}{\partial p_1}-\frac{\partial w}{\partial p_1}\frac{\partial u}{\partial p_3}\right),
\end{eqnarray*}

\item [1.x] \begin{eqnarray*}
f_x(u,v,w) & =  \{p_1,p_2\}_\Sigma  \left(\frac{\partial v}{\partial p_1}\frac{\partial w}{\partial p_2}-\frac{\partial v}{\partial p_2}\frac{\partial w}{\partial p_1}\right)+
\{p_2,p_3\}_\Sigma  \left(\frac{\partial v}{\partial p_2}\frac{\partial w}{\partial p_3}-\frac{\partial v}{\partial p_3}\frac{\partial w}{\partial p_2}\right)\\
& +  \{p_3,p_1\}_\Sigma  \left(\frac{\partial v}{\partial p_3}\frac{\partial w}{\partial p_1}-\frac{\partial v}{\partial p_1}\frac{\partial w}{\partial p_3}\right),
\end{eqnarray*}

\item [2.x] $E_{\Sigma}\left[u(p_1,p_2,p_3)\right]=g_x(u,v,w)$, 
\item [2.y] $E_{\Sigma}\left[v(p_1,p_2,p_3)\right]=g_y(u,v,w)$,
\item [2.z] $E_{\Sigma}\left[w(p_1,p_2,p_3)\right]=g_z(u,v,w)$,

\end{enumerate}

\noindent where $\varphi=(u(p_1,p_2,p_3),v(p_1,p_2,p_3),w(p_1,p_2,p_3))$ with $p_1,p_2,p_3$ are local coordinates on $ \Sigma$,
and where $\Pi_{\Sigma}$ and $E_{\Sigma}$ define the Jacobi structure on $\Sigma$ associated with the contact form $\alpha$.

\end{prop}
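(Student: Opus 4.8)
The plan is to unwind the definition of contact resolution. Since $\Sigma$ is assumed contact of dimension $3$ and $\varphi$ is surjective by hypothesis, the only condition left to characterize is that $\varphi$ be a Jacobi morphism, i.e.\ that the pullback $\varphi^{*}\colon C^{\infty}(M)\to C^{\infty}(\Sigma)$ intertwine the two Jacobi brackets:
\[
\{F\circ\varphi,\,G\circ\varphi\}_{\Sigma}=\{F,G\}_{M}\circ\varphi ,\qquad F,G\in C^{\infty}(M),
\]
where for a Jacobi structure $(\pi,E)$ one has $\{F,G\}=\pi(dF,dG)+F\,E(G)-G\,E(F)$. I would first record the data this bracket is built from: on $M$, using (\ref{yostér}), $\{x,y\}_{M}=f_{z}+x g_{y}-y g_{x}$, $\{y,z\}_{M}=f_{x}+y g_{z}-z g_{y}$, $\{z,x\}_{M}=f_{y}+z g_{x}-x g_{z}$, together with $\{1,x\}_{M}=E_{M}(x)=g_{x}$ and its cyclic companions.

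The conceptual heart is a reduction lemma: it suffices to test the intertwining relation on the pairs drawn from $\{1,x,y,z\}$. Indeed, fix $F$ and set $\Phi_{F}(G):=\{F\circ\varphi,G\circ\varphi\}_{\Sigma}-\{F,G\}_{M}\circ\varphi$. Writing everything in local coordinates $p_{1},p_{2},p_{3}$ on $\Sigma$ and $(x,y,z)$ on $M$ with $\varphi=(u,v,w)$, and using the chain rule $d(G\circ\varphi)=\sum_{k}(\partial_{k}G)\circ\varphi\,d\varphi_{k}$, one sees that $\Phi_{F}$ is a first-order differential operator along $\varphi$ in the argument $G$, of the shape $\Phi_{F}(G)=\tilde a\,(G\circ\varphi)+\sum_{k}\tilde b^{k}\,(\partial_{k}G)\circ\varphi$. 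Such an operator is determined by its values on $1$ and on $x,y,z$, since $\tilde a=\Phi_{F}(1)$ and $\tilde b^{k}=\Phi_{F}(x_{k})-\Phi_{F}(1)\,(x_{k}\circ\varphi)$; hence $\Phi_{F}\equiv 0$ as soon as it vanishes on $\{1,x,y,z\}$. Combined with the antisymmetry $\Phi_{F}(G)=-\Phi_{G}(F)$, the full morphism condition is therefore equivalent to the vanishing of $\Phi_{F}(G)$ for all unordered, distinct pairs $F,G\in\{1,x,y,z\}$. Equivalently, one can phrase this through the generalized Leibniz rule $\{F,GH\}=\{F,G\}H+G\{F,H\}-GH\{F,1\}$, valid for any Jacobi bracket, which propagates the relation from generators to all polynomials and, by locality, to all smooth functions.

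It then remains to identify these finitely many pairwise conditions with the six displayed relations. The three pairs $(1,x),(1,y),(1,z)$ give $E_{\Sigma}(u)=g_{x}(u,v,w)$, $E_{\Sigma}(v)=g_{y}(u,v,w)$, $E_{\Sigma}(w)=g_{z}(u,v,w)$, which are precisely 2.x, 2.y, 2.z. For the pair $(x,y)$ the intertwining relation reads
\[
\pi_{\Sigma}(du,dv)+u\,E_{\Sigma}(v)-v\,E_{\Sigma}(u)=f_{z}(u,v,w)+u\,g_{y}(u,v,w)-v\,g_{x}(u,v,w);
\]
substituting the already-established relations 2.x and 2.y makes the terms $u\,E_{\Sigma}(v)-v\,E_{\Sigma}(u)$ cancel against $u\,g_{y}-v\,g_{x}$, leaving $\pi_{\Sigma}(du,dv)=f_{z}(u,v,w)$. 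Expanding $\pi_{\Sigma}(du,dv)=\sum_{i<j}\bigl(\tfrac{\partial u}{\partial p_{i}}\tfrac{\partial v}{\partial p_{j}}-\tfrac{\partial u}{\partial p_{j}}\tfrac{\partial v}{\partial p_{i}}\bigr)\,\pi_{\Sigma}(dp_{i},dp_{j})$ and writing $\{p_{i},p_{j}\}_{\Sigma}:=\pi_{\Sigma}(dp_{i},dp_{j})$ reproduces 1.z verbatim; the pairs $(y,z)$ and $(z,x)$ give 1.x and 1.y in the same way.

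The main obstacle is the reduction lemma: one must justify that testing on the coordinate functions and the constant $1$ is enough, which is exactly where the first-order bidifferential nature of the Jacobi bracket (through the Leibniz identity and antisymmetry) is used; without it the six relations would only be necessary, not sufficient. The remaining work — the bracket computations and the chain-rule expansion of $\pi_{\Sigma}(du,dv)$ — is routine, the only point requiring care being the sign bookkeeping in (\ref{yostér}) that makes the cross terms cancel cleanly and matches the cyclic sums displayed in 1.x, 1.y, 1.z.
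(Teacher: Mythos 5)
Your proof is correct and arrives at exactly the paper's six relations, but by a different decomposition of the morphism condition than the one the paper uses. The paper splits ``Jacobi morphism'' at the outset into two separate projectability statements --- $E_\Sigma$ is $\varphi$-related to $E_M$, and $\Pi_\Sigma$ is $\varphi$-related to $\pi_M$ --- expressed as two commuting diagrams, and then disposes of the reduction in one line: each diagram commutes if and only if it commutes on the coordinate functions, since after the chain rule both sides are tensorial in the argument; the vector-field diagram gives 2.x--2.z and the bivector diagram gives 1.x--1.z. You instead keep the single bracket identity $\{F\circ\varphi,G\circ\varphi\}_\Sigma=\{F,G\}_M\circ\varphi$ as the definition and prove a reduction lemma tailored to it: because the Jacobi bracket is first order rather than a biderivation, the defect $\Phi_F$ has the form $\tilde a\,(G\circ\varphi)+\sum_k \tilde b^k\,(\partial_k G)\circ\varphi$, so the constant function $1$ must be added to the test family, and the antisymmetry $\Phi_F(G)=-\Phi_G(F)$ is needed to bootstrap from fixed $F\in\{1,x,y,z\}$ to arbitrary $F$ --- both steps you carry out correctly. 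The pairs $(1,x),(1,y),(1,z)$ then decouple the $E$-conditions, and substituting them into the coordinate pairs cancels the cross terms $u\,E_\Sigma(v)-v\,E_\Sigma(u)$ against $u\,g_y-v\,g_x$, leaving the pure bivector conditions: this is precisely how your single-identity formulation recovers the paper's two-diagram splitting as a consequence rather than a starting point. Your route buys a complete justification of the reduction-to-coordinates step, which the paper asserts without proof, and it works straight from the standard bracket definition of Jacobi morphism without presupposing its equivalence with tensor projectability; the paper's route buys brevity, since once the condition is split, each coordinate test is immediate and needs neither the generator $1$ nor the antisymmetry bootstrap. One detail you handled well: you read $\{p_i,p_j\}_\Sigma$ in 1.x--1.z as the bivector components $\Pi_\Sigma(dp_i,dp_j)$ rather than the full Jacobi bracket on $\Sigma$, which matches the paper's usage in the proof of Proposition \ref{refjac} (there $\{p_2,p_3\}_\Sigma=3p_2$, whereas the full bracket would give $4p_2$).
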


\begin{proof}[Proof]
This is an immediate calculation. Because $E_{\Sigma},\Pi_{\Sigma}$ can be projected by $\varphi$ onto $E_M,\pi_M$ if and only if the following diagrams commute
\begin{equation}\label{equass}
 \xymatrix{
  C^{\infty}(\Sigma)  \ar[rr]^{E_{\Sigma}}       && C^{\infty}(\Sigma)  \\
  C^{\infty}(M) \ar[rr]^{E_M} \ar[u]_{\varphi^*} && C^{\infty}(M) \ar[u]_{\varphi^*}
  }
\end{equation}
\begin{equation}\label{equass1}
 \xymatrix{
  C^{\infty}(\Sigma)\times C^{\infty}(\Sigma)  \ar[rr]^{\Pi_{\Sigma}}  && C^{\infty}(\Sigma)  \\
  C^{\infty}(M)\times C^{\infty}(M) \ar[rr]^{\pi_M} \ar[u]_{\varphi^*\times\varphi^*}  && C^{\infty}(M) \ar[u]_{\varphi^*}
  }
\end{equation}

\noindent The commutativity of (\ref{equass}) and (\ref{equass1}) is true if and only if it is true for coordinate functions. The relation (\ref{equass}) gives 2.x, 2.y, 2.z and the relation (\ref{equass1}) gives 1.x, 1.y, 1.z.
\end{proof}

\begin{rema}
Proposition \ref{hmd} obviously extends to the case of real analytic and holomorphic Jacobi structures.
\end{rema}
\noindent Let $M$ be a differential manifold of dimension $3$. We consider on a system of local coordinates $(x,y,z)$ in the neighborhood $m\in M$ the field of bivectors $\pi\in\Gamma(\wedge^2TM)$ and a vector field $E \in\Gamma(TM)$ defined by:
\begin{equation}\label{lehbel}
\pi=(x^4+y^4)\frac{\partial}{\partial x}\wedge\frac{\partial}{\partial y}+x\frac{\partial}{\partial z}\wedge\frac{\partial}{\partial x}-y\frac{\partial}{\partial y}\wedge\frac{\partial}{\partial z}, \quad E=\frac{\partial}{\partial z}.
\end{equation}
\noindent A direct calculation shows that this structure is Jacobi. It is obviously of contact on the dense open set $\left\{x \neq 0 \right\}$ or $\left\{y\neq 0\right\}$. The only singular points are the line $\left\{ x=y=0 \right\}$.

\noindent In this case, the Jacobi bracket on $C^{\infty}(M,\Rr)$ is therefore for any pair $(f,g)$ of differentiable functions on $M$ defined by $$\left \{f,g\right\}=i_{\pi}(df\wedge dg)+fi_Edg-gi_Edf.$$

\noindent In order to prove that the contact resolution theory is not empty, we now establish the following result
\begin{prop}\label{refjac} The Jacobi structure described by (\ref{lehbel}) admits contact resolution.
\end{prop}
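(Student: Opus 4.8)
The plan is to exhibit an explicit triple $(\Sigma,\alpha,\varphi)$ and to verify the six identities of Proposition~\ref{hmd}; by that proposition this is \emph{equivalent} to $\varphi$ being a contact resolution, so no further check is needed. First I would rewrite those relations more invariantly as conditions on the Jacobi brackets of the coordinate functions. Writing $\varphi=(u,v,w)$ and using $\{x,y\}_M=x^4+y^4$, together with the fact that $\{y,z\}_M$ and $\{z,x\}_M$ are (up to the normalisation of $E_M$ fixed by the Jacobi condition on (\ref{lehbel})) proportional to $y$ and $x$, the six relations become
\[
\{u,v\}_\Sigma=u^4+v^4,\qquad \{v,w\}_\Sigma\ \propto\ v,\qquad \{w,u\}_\Sigma\ \propto\ u,\qquad E_\Sigma u=E_\Sigma v=0,\quad E_\Sigma w=\text{const}.
\]
Thus $u,v$ must be invariant under the Reeb field $E_\Sigma$, and $w$ must be an affine coordinate along it.

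This structure suggests taking $\Sigma$ to be the contactization $\Sigma_0\times\Rr_{w'}$ of a symplectic surface $(\Sigma_0,\omega_0)$, with $E_\Sigma=\partial_{w'}$ the Reeb field and $u,v$ pulled back from $\Sigma_0$. The bracket conditions then split into (a) a purely two-dimensional symplectic resolution problem $\{u,v\}_{\omega_0}=u^4+v^4$, exactly of the type solved in \cite{c1234} for $(x^2+y^2)$, and (b) the requirement that the vector field $V:=\Pi_\Sigma^{\sharp}(dw)$, restricted to $\Sigma_0$, satisfy $V(u)=u$ and $V(v)=v$. A short computation identifies $V$ with a multiple of a Liouville field of $\omega_0$, namely $\mathcal{L}_V\omega_0=(E_\Sigma w)\,\omega_0$. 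Comparing the homogeneities then pins down the constant: since $u,v$ have weight $1$ and $u^4+v^4$ has weight $4$, consistency forces $\mathcal{L}_V\omega_0=-2\,\omega_0$, and this value of $E_\Sigma w$ is precisely the one dictated by the Jacobi normalisation of $E_M$ — so (a) and (b) are compatible.

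To solve (a) and (b) at once I would use a polar ansatz $u=a\cos\Theta,\ v=a\sin\Theta$, where $(a,b)$ are Darboux coordinates on $\Sigma_0$ (so $\omega_0=da\wedge db$), the scaling field is $V=a\,\partial_a-3b\,\partial_b$, and $\Theta=\Theta(a^3b)$ depends only on the $V$-invariant $a^3b$. Then $V(u)=u,\ V(v)=v$ hold automatically, and both bracket conditions collapse to the single scalar ODE $\Theta'=\cos^4\Theta+\sin^4\Theta$, which has a smooth, strictly increasing global solution because $\cos^4+\sin^4\ge\tfrac12$. With $\Theta$ so chosen, the triple
\[
\Sigma=\Rr^3_{a,b,w'},\qquad \alpha=dw'-\tfrac32\,b\,da-\tfrac12\,a\,db,\qquad \varphi=\bigl(a\cos\Theta(a^3b),\ a\sin\Theta(a^3b),\ -2w'\bigr)
\]
is a contact manifold ($\alpha\wedge d\alpha=dw'\wedge da\wedge db\neq0$) together with a smooth map, and I would check by substitution that all six relations of Proposition~\ref{hmd} are satisfied.

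The step I expect to be the genuine obstacle is \emph{surjectivity}: reaching the singular line $\{x=y=0\}$ while keeping $\Sigma$ smooth and $\alpha$ contact everywhere. This is exactly what fails for the naive ``regular-part'' model, in which $u,v$ never vanish and the image misses the axis, and it is why the proposition is not vacuous. The resolution of this difficulty is to choose the scaling field $V$ with a genuine zero on $\Sigma_0$: because $u^2+v^2=a^2$, the entire axis is covered at $a=0$ (where the angle $\Theta$ is irrelevant), and $\alpha$ remains nondegenerate there since it does not involve $\Theta$. I would conclude by verifying that $u^2+v^2=a^2$ ranges over $[0,\infty)$ while $\arg(u,v)=\Theta(a^3b)$ sweeps out all angles (the argument runs over all of $\Rr$ as $b\to\pm\infty$), so that $(u,v)$ is onto $\Rr^2$ and $w=-2w'$ is onto $\Rr$; this yields a connected, surjective contact resolution.
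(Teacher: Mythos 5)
Your proposal is correct and is essentially the paper's own construction, repackaged. The paper also takes $\Sigma=\Rr^3$ with the spiral map $\varphi(p_1,p_2,p_3)=(p_1\sin(p_1^3p_2),\,p_1\cos(p_1^3p_2),\,p_3)$, verifies the six relations of Proposition~\ref{hmd}, and obtains surjectivity over the singular line $\{x=y=0\}$ exactly as you do, from the locus where the radial coordinate vanishes and the phase becomes irrelevant. The one structural difference is where the nonconstant factor $F=\cos^4+\sin^4$ lives: the paper keeps the phase linear ($p_1^3p_2$) and twists the contact form, $\alpha=dp_3+\frac{p_1}{F}dp_2+\frac{3p_2}{F}dp_1$, so that $\{p_1,p_2\}_\Sigma=F$, whereas you keep a constant-coefficient contact form and push $F$ into the phase via the ODE $\Theta'=\cos^4\Theta+\sin^4\Theta$. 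The two models are carried into one another by the smooth substitution $b=p_2\,h(a^3p_2)$, where $\Theta^{-1}(s)=s\,h(s)$ with $h$ smooth; neither buys extra generality, though your ODE packaging makes global solvability ($\tfrac12\le\Theta'\le 1$) and the weight bookkeeping more transparent.

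That bookkeeping surfaces a genuine normalization discrepancy, and you resolved it more carefully than the printed proof. Your Liouville-weight argument is forced: if $Wu=\lambda u$ and $Wv=\lambda v$ for the Liouville field $W$ of $\omega_0$, then $\{u,v\}$ has weight $2\lambda-1$ while $u^4+v^4$ has weight $4\lambda$, so $\lambda=-\tfrac12$, hence $w=-2w'$ and $E_\Sigma[w]=-2$, not $1$. This is as it should be: the field $E=\partial_z$ printed in \eqref{lehbel} does not satisfy the paper's own compatibility condition (1) of Section 2, which for this $\pi$ forces $g_z=2$ (the introduction indeed announces $E=2d_z$), and the printed verification of relation 2.z with $g_z=1$ rests on a bivector normalization off by the same factor (already the claimed $d\alpha=\tfrac{4}{F}\,dp_1\wedge dp_2$ should be $-\tfrac{2}{F}\,dp_1\wedge dp_2$). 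So your constant $-2$ is the consistent choice: your triple resolves the Jacobi structure that \eqref{lehbel} was meant to define, namely $(\pi,\pm 2\partial_z)$ with the sign fixed by the contact-to-Jacobi sign convention, and with that reading your proof goes through as written.
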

\begin{proof}[Proof]
Our contact resolution candidate is given by: $\Sigma= \Rr^3$ equipped with the contact structure given by the $1$-form
$$\alpha=dp_3 + \frac{p_1}{\cos^4(p_1^3p_2)+\sin^4(p_1^3p_2)}dp_2 + \frac{3p_2}{\cos^4(p_1^3p_2) +\sin^4(p_1^3p_2)}dp_1,$$ with $(p_1,p_2,p_3)$ the coordinates on $\Sigma= \Rr^3$. Let us verify that this structure is a contact structure. The coefficient "$3$" in the formula giving $\alpha $ was chosen such that $$ d \alpha = \frac{4}{\cos^4(p_1^3 p_2)+\sin^4(p_1 ^3 p_2)} \, \, \, dp_1 \wedge dp_2 .$$
It is therefore obvious that the Reeb field is $\frac{\partial}{\partial p_3}$ and a simple calculation shows that the bivector is given by:
$$  \left(\cos^4(p_1^3 p_2)+\sin^4(p_1^3 p_2)\right)  \frac{\partial}{\partial p_1}   \wedge \frac{\partial}{\partial p_2} 
+
3p_2
\frac{\partial}{\partial p_2}   \wedge \frac{\partial}{\partial p_3} 
+ 
 p_1
\frac{\partial}{\partial p_3}   \wedge \frac{\partial}{\partial p_1} .
   $$
We define the Jacobi morphism $\varphi$ from $(\Sigma,\alpha)$ to $(M,\pi,E)$ by:

\begin{equation}\label{etoill}\begin{array}{rrcl} \varphi :& \Sigma& \to & M \\ & (p_1,p_2,p_3)& \mapsto & (p_1\sin(p_1^3p_2 ),p_1\cos(p_1^3p_2),p_3).
    \end{array}
  \end{equation}
It is easy to verify that this application is surjective. It remains for us to show that this map is a Jacobi morphism, which is why it suffices to verify that the relations of Proposition \ref{hmd} are satisfied. This is done by direct calculation.
In order to simplify the calculations, we use the notations of Proposition \ref{hmd}:

$$u(p_1,p_2,p_3)=p_1\sin(p_1^3p_2),v(p_1,p_2,p_3)=p_1\cos(p_1^3p_2),w(p_1,p_2,p_3)=p_3,$$
and 
$$f_z=x^4+y^4, f_y=x, f_x=-y, g_x=g_y=0, g_z=1.$$ 

\noindent Let us start with the first relation 1.z in Proposition \ref{hmd}.
On the one hand, we have $ f_z(u,v,w) = p_1^4\left(\sin^4(p_1^3p_2)+\cos^4(p_1^3p_2)\right).$
On the other hand, we have
\begin{eqnarray*}
f_z(u,v,w) & = \{p_1,p_2\}_\Sigma  \left(\frac{\partial u}{\partial p_1}\frac{\partial v}{\partial p_2}-\frac{\partial u}{\partial p_2}\frac{\partial v}{\partial p_1}\right)+
\{p_2,p_3\}_\Sigma  \left(\frac{\partial u}{\partial p_2}\frac{\partial v}{\partial p_3}-\frac{\partial u}{\partial p_3}\frac{\partial v}{\partial p_2}\right)\\ 
& + \{p_3,p_1\}_\Sigma  \left(\frac{\partial u}{\partial p_3}\frac{\partial v}{\partial p_1}-\frac{\partial u}{\partial p_1}\frac{\partial v}{\partial p_3}\right)=\{p_1,p_2\}_\Sigma p_1^4+0+0=p_1^4\{p_1,p_2\}_\Sigma.
\end{eqnarray*}
So the first relation is satisfied since $$\{p_1,p_2\}_\Sigma=\left(\sin^4(p_1^3p_2)+\cos^4(p_1^3p_2)\right).$$
  
\noindent Relations 1.y and 1.x give
\begin{eqnarray*}
f_y(u,v,w) &= \{p_1,p_2\}_\Sigma\left(\frac{\partial w}{\partial p_1}\frac{\partial u}{\partial p_2}
-\frac{\partial w}{\partial p_2}\frac{\partial u}{\partial p_1}\right)
+ \{p_2,p_3\}_\Sigma  \left(\frac{\partial w}{\partial p_2}\frac{\partial u}{\partial p_3}-\frac{\partial w}{\partial p_3}\frac{\partial u}{\partial p_2}\right)\\
 &+ \{p_3,p_1\}_\Sigma  \left(\frac{\partial w}{\partial p_3}\frac{\partial u}{\partial p_1}-\frac{\partial w}{\partial p_1}\frac{\partial u}{\partial p_3}\right)
 = 0 - p_1^4\cos(p_1^3p_2)\{p_2,p_3\}_\Sigma \\
 &+ \left(\sin(p_1^3p_2)+3p_1^3p_2\cos(p_1^3p_2) \right) \{p_3,p_1\}_\Sigma=p_1\sin(p_1^3p_2).
\end{eqnarray*}

\noindent and 
 \begin{eqnarray*}
f_x(u,v,w)&= \{p_1,p_2\}_\Sigma \left(\frac{\partial v}{\partial p_1}\frac{\partial w}{\partial p_2}-\frac{\partial v}{\partial p_2}\frac{\partial w}{\partial p_1}\right)+
\{p_2,p_3\}_\Sigma  \left(\frac{\partial v}{\partial p_2}\frac{\partial w}{\partial p_3}-\frac{\partial v}{\partial p_3}\frac{\partial w}{\partial p_2}\right)\\
& + \{p_3,p_1\}_\Sigma  \left(\frac{\partial v}{\partial p_3}\frac{\partial w}{\partial p_1}-\frac{\partial v}{\partial p_1}\frac{\partial w}{\partial p_3}\right) = 0 - p^4\sin(p_1^3p_2)\{p_2,p_3\}_\Sigma \\
& + \left( -\cos(p_1^3p_2)+3p_1^3p_2\sin(p_1^3p_2) \right) \{p_3,p_1\}_\Sigma=-p_1\cos(p_1^3p_2).
\end{eqnarray*}

\noindent These two relations are satisfied if and only if $\{p_3,p_1\}_{\Sigma}=p_1$ and $\{p_2,p_3\}_{\Sigma}=~3p_2$.

\noindent The remaining relations 2.x, 2.y, 2.z give:
  $E_{\Sigma}\left[u(p_1,p_2,p_3)\right]=0=g_1(u,v,w)$,
  $E_{\Sigma}\left[v(p_1,p_2,p_3)\right]=0=g_2(u,v,w)$,
  $E_{\Sigma}\left[w(p_1,p_2,p_3)\right]=1=g_3(u,v,w)$.
\noindent They are therefore satisfied for $E_\Sigma=\frac{\partial}{\partial p_3}$.

\end{proof}

\begin{rema}
\normalfont
The contact resolution given by the theorem \ref{refjac} is connected and therefore semi-connected, but it is not proper.
\end{rema}

\section{Contact Resolution and Poissonification}
We will discuss the link between the contact resolutions and the poissonification. 

\begin{defi}
We call homogeneous Poisson manifold and we denote $(M,\pi_M,Z_M)$, a Poisson manifold $(M, \pi_M)$ equipped with a complete vector field $Z_M$, called homothety field \footnote{All the vector fields that we are going to use in this article are integrable, i.e. they will be vector fields whose flows are defined for all time.}, which verifies
$$\left[Z_M,\pi_M\right]=L_{Z_M}\pi_M=\pi_M.$$

\end{defi}

There is a way to associate an homogeneous Poisson structure with any contact structure.
This is recalled in appendix \ref{appen}.
For the notion of poissonification production, see Proposition \ref{Poissoni}.
\begin{thm}\label{équival}
A Jacobi manifold $(M,\pi_M, E_M)$ of dimension $2n+1$ admits a contact resolution if and only if the poissonification $(P, \pi_P, Z_P)$ admits an homogeneous symplectic resolution.
\end{thm}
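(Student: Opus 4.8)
The plan is to exploit the Poissonification functor of Proposition~\ref{Poissoni} together with the contact--homogeneous-symplectic correspondence recalled in Appendix~\ref{appen}, showing that these two constructions turn a contact resolution of $M$ into a homogeneous symplectic resolution of $P$ and conversely. Recall that $P = M \times \Rr$, that the homothety field is $Z_P = \frac{\partial}{\partial s}$ with $s$ the coordinate on the $\Rr$ factor, and that the transversal $M \times \{0\} = s^{-1}(0)$, being a hypersurface transverse to $Z_P$, carries exactly the original Jacobi structure $(\pi_M, E_M)$. This last point expresses that reduction at a transversal of the homothety field is inverse to Poissonification, and it is what allows the two directions below to be matched.

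For the direct implication, suppose $(\Sigma, \alpha, \varphi)$ is a contact resolution. Viewing $(\Sigma, \alpha)$ as a Jacobi manifold $(\Sigma, \Pi_\Sigma, E_\Sigma)$ and applying Poissonification, I obtain the symplectification $\hat\Sigma = \Sigma \times \Rr$, a homogeneous symplectic manifold with homothety field $Z_{\hat\Sigma} = \frac{\partial}{\partial t}$. I then set $\hat\varphi := \varphi \times \mathrm{id}_\Rr : \hat\Sigma \to P$. Since $\varphi$ is surjective, so is $\hat\varphi$; and because the Poisson bivector on each side is built from $(\Pi, E)$ by the same formula while $\hat\varphi$ acts as the identity on the $\Rr$-factor, the projectability of $(\Pi_\Sigma, E_\Sigma)$ onto $(\pi_M, E_M)$ translates into projectability of $\pi_{\hat\Sigma}$ onto $\pi_P$, and moreover $\hat\varphi_* Z_{\hat\Sigma} = Z_P$. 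Thus $(\hat\Sigma, \omega_{\hat\Sigma}, \hat\varphi)$ is a homogeneous symplectic resolution of $P$.

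For the converse, let $(S, \omega, Z_S, \psi)$ be a homogeneous symplectic resolution of $P$, so $\psi: S \to P$ is a surjective homogeneous Poisson morphism with $\psi_* Z_S = Z_P$. The function $h := s \circ \psi$ satisfies $Z_S(h) = (Z_P s)\circ\psi = 1$, hence $dh$ never vanishes and $\Sigma := h^{-1}(0)$ is a smooth hypersurface everywhere transverse to the Liouville field $Z_S$. Since $L_{Z_S}\omega = \omega$ and $d\omega = 0$ force $\omega = d(i_{Z_S}\omega)$, the primitive $i_{Z_S}\omega$ is global, and by Appendix~\ref{appen} the hypersurface $\Sigma$ inherits the contact form $\alpha = (i_{Z_S}\omega)|_\Sigma$, with $\dim\Sigma = \dim S - 1 = 2n+1 = \dim M$. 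I define $\varphi := \mathrm{pr}_M \circ \psi|_\Sigma$; as $h=0$ on $\Sigma$ means $\psi(\Sigma)\subset M\times\{0\}$, this is simply $\psi|_\Sigma$ landing in the transversal $M\times\{0\}\cong M$. Surjectivity of $\varphi$ follows from that of $\psi$: any $m\in M$ has a $\psi$-preimage $x$ of $(m,0)$, and $h(x)=0$ forces $x\in\Sigma$. Finally, since both $\Sigma$ and $M$ arise by reduction at the transversals $h^{-1}(0)$ and $s^{-1}(0)$ of the respective homothety fields, and $\psi$ carries the former into the latter while intertwining the homogeneous Poisson structures, the restriction $\varphi$ is a Jacobi morphism, giving the desired contact resolution $(\Sigma, \alpha, \varphi)$.

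The central point to check carefully in each direction is the final one: that projectability of the tensors is preserved under Poissonification and, dually, under reduction at a transversal of the homothety field. I expect this compatibility---essentially the claim that Poissonification and transversal reduction are mutually inverse \emph{functors}, and not merely object-level constructions---to be the main obstacle, as it requires matching the Poisson bracket of $S$ restricted to $\Sigma$ with the Jacobi bracket of $\Sigma$ and verifying that $\psi$ intertwines these with the corresponding structures on $P$ and $M$.
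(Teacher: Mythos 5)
Your proposal follows essentially the same route as the paper's proof: in the direct direction, symplectify the contact resolution and take $\widetilde{\varphi}=\varphi\times\mathrm{id}_{\Rr}$ (with the conformal factor $1/h$ harmless because $\widetilde{\varphi}$ preserves the $\Rr$-coordinate, exactly as you observe); in the converse direction, cut the homogeneous symplectic resolution along the hypersurface $\Sigma=(\widetilde{\varphi}^{*}t)^{-1}(0)$, which is transverse to the homothety field because $Z_{\widetilde{\Sigma}}[\widetilde{\varphi}^{*}t]=1$, and apply Proposition \ref{nown} to obtain the contact structure. Your surjectivity argument for $\varphi=\psi|_{\Sigma}$ (a preimage of $(m,0)$ automatically lies in $h^{-1}(0)$) is in fact spelled out more carefully than in the paper.

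The one place your write-up stops short is exactly where you flag it: you assert, but do not verify, that $\psi|_{\Sigma}$ is a Jacobi morphism, and you anticipate a delicate compatibility ("Poissonification and transversal reduction are mutually inverse functors") as the main obstacle. The paper closes this step with a short pointwise argument that requires no such functoriality. Along the transversals one has the decompositions $\Pi_{\widetilde{\Sigma}}=\pi_{\Sigma}+Z_{\widetilde{\Sigma}}\wedge E_{\Sigma}$ and $\pi_P=\pi_M+Z_P\wedge E_M$ (Proposition \ref{nown}; note $h\equiv 1$ on $\{t=0\}$, so the conformal factor drops out there). Pushing the first forward at $\sigma\in\Sigma$ and using $T\widetilde{\varphi}(Z_{\widetilde{\Sigma}})=Z_P$ yields $\wedge^2T_{\sigma}\widetilde{\varphi}(\Pi_{\widetilde{\Sigma}})=\wedge^2T_{\sigma}\widetilde{\varphi}(\pi_{\Sigma})+Z_P\wedge T_{\sigma}\widetilde{\varphi}(E_{\Sigma})$, which must equal $\pi_M+Z_P\wedge E_M$. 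Since $\widetilde{\varphi}(\Sigma)\subset M=\{t=0\}$, both $\wedge^2T\widetilde{\varphi}(\pi_{\Sigma})$ and $T\widetilde{\varphi}(E_{\Sigma})$ are tangent to $M$, and since $Z_P$ is transverse to $M$, the splitting of a bivector at a point of $M$ into a part in $\wedge^2 TM$ plus $Z_P$ wedge a vector in $TM$ is unique; hence $\wedge^2T\widetilde{\varphi}(\pi_{\Sigma})=\pi_M$ and $T\widetilde{\varphi}(E_{\Sigma})=E_M$, i.e. $\widetilde{\varphi}|_{\Sigma}$ is a Jacobi morphism. So the gap you left is genuine but easily filled, and with this uniqueness-of-decomposition paragraph added your argument coincides with the paper's.
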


\begin{proof}[Proof]
We start with the simplest implication: if there is a contact resolution $(\Sigma,\pi_{\Sigma},E_{\Sigma},\varphi)$ of $(M,\pi_M, E_M) $, then the poissonification manifold $(P, \pi_P, Z_P)$ defined as in Proposition \ref{Poissoni} admits an homogeneous symplectic resolution.
\noindent Consider $(\Sigma,\pi_{\Sigma},E_{\Sigma},\varphi)$ the contact resolution of $(M,\pi_M, E_M)$. By Proposition \ref{symplec}, the poissonnification of the contact manifold $(\Sigma,\pi_{\Sigma},E_{\Sigma})$ is an homogeneous symplectic manifold $(\widetilde{\Sigma}=\Sigma\times\Rr, \pi_{\widetilde{\Sigma}}, Z_{\widetilde{\Sigma}})$. Now let us look at the map $\widetilde{\varphi}=\varphi\times id_{\Rr}$ defined by
\begin{equation}\label{etoill*}\begin{array}{rrcl} \widetilde{\varphi} :&  \widetilde{\Sigma}& \to & P \\ & (\sigma,t)& \mapsto& (\varphi(\sigma),t). 
   \end{array}
 \end{equation}

\noindent It is clear that $Z_{\widetilde{\Sigma}}=\frac{\partial}{\partial t}$ is projectable by $\widetilde{\varphi}$ and projects onto $Z_{P}= \frac{\partial}{\partial t}$. As $\varphi$ is a Jacobi morphism, this implies that $\pi_{\Sigma}$ and $E_{\Sigma}$ can be projected by $\varphi$ onto $M$ and have for projection $\pi_M$ and $E_M$. Consequently, the bivector $\pi_{\Sigma}+Z_{\widetilde{\Sigma}}\wedge E_{\Sigma}$ is projectable by $\widetilde{\varphi}$ onto $P$ and has for projection the bivector $\pi_M +Z_P\wedge E_M$. It follows that the Poisson bivector on $\widetilde{\Sigma}$ associated with the sympectic structure is projectable by $\widetilde{\varphi}$ on $P$ and has for projection the Poisson bivector on $P $. So $\widetilde{\varphi}$ is a Poisson morphism. It is surjective because $\varphi$ is surjective. This implies that $(\widetilde{\Sigma},\Pi_{\widetilde{\Sigma}})$ is a symplectic resolution of $(P,\pi_P)$. As $Z_{\widetilde{\Sigma}}$ projects onto $Z_{P}$ this resolution is homogeneous, this proves the first implication.

\noindent Let us now prove the second implication: if there exists an homogeneous symplectic resolution of
$(P, \pi_P, Z_P)$, then the Jacobi manifold $(M,\pi_M, E_M)$
defined as in Proposition \ref{nown} admits contact resolution.

\noindent First, we consider the subvariety $M\subset P$ given by $\{t=0\}$ in $P$.
Let $(\widetilde{\Sigma}, \pi_{\widetilde{\Sigma}}, Z_{\widetilde{\Sigma}},\widetilde{\varphi} )$ be an homogeneous symplectic resolution of $(P, \pi_P , Z_P)$. Let $\Sigma=\{\sigma\in \widetilde{\Sigma} \mid \ t(\widetilde{\varphi}(\sigma))=0\}$. Since $Z_P[t]=1$, then $Z_{\widetilde{\Sigma}}[\widetilde{\varphi}^*t]=1$, this implies that the function $\widetilde{\varphi}^*t $ has a nonzero differential and therefore $\Sigma\subset \widetilde{\Sigma}$ is a submanifold of $\widetilde{\Sigma}$ transverse to $Z_{\widetilde{\Sigma}}$.
Under Proposition \ref{nown},
there is an induced contact structure $(\Pi_{\Sigma},E_{\Sigma})$ on $\Sigma\subset\widetilde{\Sigma}$. The restriction of $\widetilde{\varphi}$ to $\Sigma$ is surjective on $M$ because $\widetilde{\varphi}$ is surjective. We still have to prove that the restriction is a Jacobi morphism.

\noindent Since $\widetilde{\varphi}$ is a Poisson morphism, then $\Pi_{\widetilde{\Sigma}}$ and $Z_{\widetilde{\Sigma}}$ are projectable by $\widetilde{ \varphi}$ on $P$ and have for projection $\pi_P$ and $Z_P$ respectively. Moreover, the Poisson structures on $\widetilde{\Sigma}$ and $P$ are of the form:

\begin{equation}
\Pi_{\widetilde{\Sigma}}=\pi_{\Sigma}+Z_{\widetilde{\Sigma}}\wedge E_{\Sigma},
\quad
\pi_P=\pi_M+Z_P\wedge E_M,
\end{equation}
\noindent This is only possible if for all $\sigma\in\widetilde{\Sigma}$, we have 
$\wedge^2T_{\sigma}\widetilde{\varphi}(\Pi_{\widetilde{\Sigma}})=\wedge^2T_{\sigma}\widetilde{\varphi}(\pi_{\Sigma})+T_{\sigma}\widetilde{\varphi}(Z_{\widetilde{\Sigma}})\wedge T_{\sigma}\widetilde{\varphi}(E_{\Sigma})=\wedge^2T_{\sigma}\widetilde{\varphi}(\pi_{\Sigma})+Z_P\wedge T_{\sigma}\widetilde{\varphi}(E_{\Sigma})=\pi_M+Z_P\wedge E_M=\pi_P.$

\noindent Hence, $\wedge^2T\widetilde{\varphi}(\pi_{\Sigma})=\pi_M$ and $T\widetilde{\varphi}(E_{\Sigma})=E_M.$
  This implies that $\widetilde{\varphi}\mid_{\Sigma}$ is a Jacobi morphism. This proves the second implication and completes the proof.

\end{proof}

\noindent The above result can be specified as follows
\begin{prop}\label{semiprop}
A smooth (resp, real analytic or holomorphic) Jacobi variety $(M,\pi_M, E_M)$ of dimension $2n+1$ admits a proper contact resolution (resp, semi-connected, resp. separable) if and only if the poissonification $(P,\pi_P,Z_P)$ admits a proper homogeneous symplectic resolution (resp, semi-connected resp. separable).
\end{prop}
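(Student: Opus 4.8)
The plan is to observe that the two constructions appearing in the proof of Theorem~\ref{équival} are mutually inverse, and then to follow each of the three adjectives through them. The forward construction sends a contact resolution $(\Sigma,\alpha,\varphi)$ to the symplectic resolution with total space $\widetilde\Sigma=\Sigma\times\Rr$ and map $\widetilde\varphi=\varphi\times\mathrm{id}_\Rr$ over $P=M\times\Rr$; the backward construction restricts an homogeneous symplectic resolution $(\widetilde\Sigma,\pi_{\widetilde\Sigma},Z_{\widetilde\Sigma},\widetilde\varphi)$ to the slice $\Sigma=\{\widetilde\varphi^*t=0\}$. The structural fact that glues these together is that, because $Z_{\widetilde\Sigma}$ is complete and $Z_{\widetilde\Sigma}[\widetilde\varphi^*t]=1$, its flow yields an equivariant diffeomorphism $\Psi:\Sigma\times\Rr\xrightarrow{\ \sim\ }\widetilde\Sigma$, $(\sigma,s)\mapsto\Phi^{Z_{\widetilde\Sigma}}_s(\sigma)$, intertwining $\widetilde\varphi$ with $\varphi\times\mathrm{id}_\Rr$ (here one uses that $Z_{\widetilde\Sigma}$ projects to $Z_P=\partial_t$ and that the flow of $\partial_t$ on $P=M\times\Rr$ is translation in $t$). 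Thus it suffices to check that each property is preserved under taking the product with $\Rr$ and under restriction to the closed slice $\{t=0\}$.

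For properness: in the forward direction $\widetilde\varphi=\varphi\times\mathrm{id}_\Rr$, and a product with the (proper) identity of $\Rr$ is proper if and only if $\varphi$ is; in the backward direction, for any compact $K\subset M$ one has $(\widetilde\varphi|_\Sigma)^{-1}(K)=\widetilde\varphi^{-1}(K)$, since $K\subset\{t=0\}$ forces its preimage to lie in $\Sigma$, so this set is compact whenever $\widetilde\varphi$ is proper. Hence $M$ admits a proper contact resolution if and only if $P$ admits a proper homogeneous symplectic resolution.

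For semi-connectedness I would first record, from the properties of the Poissonification (Propositions~\ref{Poissoni} and \ref{nown}), that $\pi_P$ is nondegenerate at a point of $P$ exactly over the regular locus of the Jacobi structure, so that $\mathbf{P}_{sing}=\mathbf{M}_{sing}\times\Rr$. Since $\Rr$ is connected, $\Psi$ identifies the connected components of $\widetilde\Sigma$ with those of $\Sigma$, each component being of the form $C\times\Rr$; and $\widetilde\varphi(C\times\Rr)=\varphi(C)\times\Rr$ is contained in $\mathbf{P}_{sing}=\mathbf{M}_{sing}\times\Rr$ if and only if $\varphi(C)\subseteq\mathbf{M}_{sing}$. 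Therefore the semi-connectedness of the contact resolution is equivalent to that of the corresponding symplectic resolution. Separability, i.e. the Hausdorff property of the total space, transfers in the same manner: $\Sigma\times\Rr$ is Hausdorff if and only if $\Sigma$ is, and a submanifold of a Hausdorff manifold is Hausdorff.

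The main obstacle is the semi-connectedness statement, which is the only one that is not pure point-set topology: it rests on the two inputs above, namely the identification $\mathbf{P}_{sing}=\mathbf{M}_{sing}\times\Rr$ coming from the geometry of the Poissonification, and the equivariant trivialization $\widetilde\Sigma\cong\Sigma\times\Rr$, which is where completeness of the homothety field $Z_{\widetilde\Sigma}$ is genuinely used. Once these are in hand, the properness and separability cases reduce to standard facts about products with $\Rr$ and closed slices, and the three equivalences follow by combining the forward and backward constructions.
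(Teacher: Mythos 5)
Your proof is correct and takes essentially the same route as the paper's: product with $\Rr$ in the forward direction, restriction to the slice $\{\widetilde{\varphi}^*t=0\}$ in the backward direction, with connected components matched via the flow of the homothety field and singular loci compared via Lemma~\ref{mel}; your explicit equivariant trivialization $\Psi:\Sigma\times\Rr\xrightarrow{\ \sim\ }\widetilde{\Sigma}$ simply makes rigorous the paper's informal claim that every point of $\widetilde{\Sigma}$ is reached from $\Sigma$ by the flow. One minor caveat: \emph{separable} here means admitting a countable dense subset (not the Hausdorff property, as you gloss it), but your transfer argument---stability under product with $\Rr$ and passage to a submanifold---works verbatim for that property, and indeed goes slightly beyond the paper, whose written proof treats only the proper and semi-connected cases.
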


\begin{proof}[Proof]
Let $(\Sigma,\alpha,\varphi)$ be a contact resolution of the Jacobi manifold $(M,\pi_M, E_M)$ and $(\widetilde{\Sigma},\Pi_{\widetilde{\Sigma }},Z_{\widetilde{\Sigma}},\widetilde{\varphi})$ an homogeneous symplectic resolution of the poissonification $(P,\pi_P, Z_P)$.
\noindent We are going to show that the contact resolution is proper (resp, semi-connected) if and only if the homogeneous symplectic resolution is proper (resp, semi-connected, resp. separable).

\noindent If $\varphi:\Sigma \to M$ is proper (resp, semi-connected), then $\varphi\times id_{\Rr}:\Sigma\times\Rr \to M\times\Rr$ is proper (resp, semi-connected by the lemma \ref{mel}).

\noindent Conversely, if $\widetilde{\varphi}:\widetilde{\Sigma} \to P$ is proper (resp, semi-connected), then its restriction $\widetilde{\varphi}\mid_{\Sigma}$ on $\Sigma$ is proper because the restriction of a proper function to a closed one is proper. Similarly, if $\widetilde{\varphi}$ is semi-connected, then its restriction $\widetilde{\varphi}\mid_{\Sigma}$ is, because $\Sigma$ is a closed set of $\widetilde {\Sigma}$ ($\Sigma$ is defined by the zeros of the continuous function $\varphi^*t$), it is also semi-connected because $Z_P$ is integrable and any point of $\widetilde{\Sigma}$ can be obtained from a point of $\Sigma$ by following the flow of $Z_P$. So $\widetilde{\Sigma}$ and $\Sigma$ have the same connected components. By Lemma \ref{mel}, a connected component of $\widetilde{\Sigma}$ is sent to the singular locus of $\pi_P$ if and only if its restriction to $\Sigma$ is sent to the locus singular of $(\pi_M,E_M)$. This ends the proof of Proposition.
\end{proof}

From now on, all manifolds will be assumed to be separable, and we place ourselves only in the smooth or real analytic case.
We are going to use \cite{articl} again to show that certain classes of Jacobi structures, although of contact on a dense open set, cannot admit contact resolutions.

\section{Non-existence of contact resolutions}\label{secjac}

The theorem \ref{équival} and Proposition \ref{semiprop} now allow to use the theorems obtained in \cite{articl} in the Poisson case to prove the non-existence of contact resolutions for some classes of Jacobi structures.

\begin{coro}
  If a connected Jacobi manifold admits separable contact resolution, then it is contact over a dense open set. In particular, it is of odd dimension.
\end{coro}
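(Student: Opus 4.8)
The plan is to reduce the whole statement to the Poisson setting via Poissonification and then quote the corresponding theorem of \cite{articl}. The assertion about the dimension is essentially built into the definitions, so I would dispose of it first: a contact resolution $(\Sigma,\alpha,\varphi)$ requires $\dim\Sigma=\dim M$ with $(\Sigma,\alpha)$ a contact manifold, and contact manifolds are odd-dimensional; hence $\dim M$ is odd, say $\dim M=2n+1$. (The odd dimension also follows a posteriori from "$M$ contact on a dense open set", since a contact structure lives only in odd dimension, but recording it at the outset is what makes Theorem \ref{équival} and Proposition \ref{semiprop}, stated in dimension $2n+1$, applicable.)

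Next I would transport the hypothesis across the Poissonification. Since $M$ is connected, $P=M\times\Rr$ is connected as well. By Proposition \ref{semiprop}, a separable contact resolution of $(M,\pi_M,E_M)$ exists if and only if the Poissonification $(P,\pi_P,Z_P)$ admits a separable homogeneous symplectic resolution; forgetting the extra homogeneity datum, $P$ in particular carries a separable symplectic resolution. At this point I would invoke the theorem of \cite{articl} in the Poisson case: a connected Poisson manifold admitting a separable symplectic resolution is symplectic on a dense open subset. Applied to the connected manifold $P$, this yields that $P$ is symplectic on a dense open set.

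It remains to translate "$P$ symplectic on a dense open set" back into "$M$ contact on a dense open set", which is the only genuinely computational step. Writing $\pi_P=\pi_M+Z_P\wedge E_M$ with $Z_P=\partial/\partial t$ and $\dim P=2n+2$, I would compute the top wedge power
$$\pi_P^{n+1}=(\pi_M+Z_P\wedge E_M)^{n+1}=(n+1)\,\pi_M^{n}\wedge Z_P\wedge E_M,$$
where the terms containing $(Z_P\wedge E_M)^{2}=0$ vanish and $\pi_M^{n+1}=0$ because $\pi_M$ is a bivector on the $(2n+1)$-dimensional manifold $M$. As $Z_P$ is everywhere transverse to $TM$, this shows that $(m,t)$ is a symplectic point of $P$ if and only if $E_M\wedge\pi_M^{n}\neq0$ at $m$, i.e. if and only if $m$ is a regular point of the Jacobi structure. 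Hence the symplectic locus of $P$ is exactly $(M\setminus\mathbf{M}_{sing})\times\Rr$, which is dense in $P$ precisely when $M\setminus\mathbf{M}_{sing}$ is dense in $M$. Combined with the previous paragraph, this gives that $M$ is contact on a dense open set, as claimed.

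The main obstacle is twofold and essentially bookkeeping rather than deep. First, one must check that the hypotheses of the imported \cite{articl} theorem are genuinely met, notably connectedness of $P$ and that the notion of separability matching the one preserved by Proposition \ref{semiprop}. Second, one must confirm the rank identity above so that, fibrewise, the regular locus of the Jacobi structure is identified with the symplectic locus of the Poissonification; a nowhere-vanishing conformal factor appearing in the precise definition of $\pi_P$ would not affect this locus, so the argument is robust to the exact normalization chosen in Proposition \ref{Poissoni}.
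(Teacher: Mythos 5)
Your proposal is correct and follows essentially the same route as the paper: reduce via Poissonification to the Poisson setting and invoke the result of \cite{articl} that only Poisson manifolds symplectic on a dense open set admit separable symplectic resolutions. The only difference is that you spell out details the paper leaves implicit --- citing Proposition \ref{semiprop} (which explicitly preserves separability) rather than Theorem \ref{équival}, and re-deriving the identification of the singular locus of $\pi_P$ with $\mathbf{M}_{sing}\times\Rr$, which is exactly the paper's Lemma \ref{mel}.
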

\begin{proof}
According to \cite{articl} only symplectic Poisson manifolds on a dense open set admit a separable symplectic resolution, so the theorem \ref{équival} directly implies the result.
\end{proof}

\noindent The main result in this section is the following.

\begin{thm}\label{cont}

Jacobi manifolds which are contact on a dense open set and whose singular locus contains a submanifold of codimension $1$ do not admit proper contact resolution.
\end{thm}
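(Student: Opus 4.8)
The plan is to reduce the statement to the already-established Poisson analogue via the Poissonification correspondence, rather than attempting a direct construction-free argument in the Jacobi category. By Theorem~\ref{équival} together with Proposition~\ref{semiprop}, a Jacobi manifold $(M,\pi_M,E_M)$ of dimension $2n+1$ admits a proper contact resolution if and only if its Poissonification $(P,\pi_P,Z_P)$, with $P=M\times\Rr$, admits a proper homogeneous symplectic resolution. So the whole task becomes: show that if the singular locus of the Jacobi structure on $M$ contains a submanifold of codimension $1$, then $P$ has singular locus containing a submanifold of codimension $1$ as well, and then invoke the Poisson non-existence theorem of \cite{articl} (the result that a Poisson structure, symplectic on a dense open set, whose singular locus contains a codimension-$1$ submanifold admits no proper symplectic resolution).

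The first concrete step is to recall the explicit form of the Poissonification bivector, $\pi_P=\pi_M+Z_P\wedge E_M$ on $P=M\times\Rr$ with $Z_P=\partial/\partial t$ (consistent with the formula used in the proof of Theorem~\ref{équival}). Next I would compare the top power defining nondegeneracy: on $M$ the regular locus is $\{E_M\wedge\pi_M^n\neq 0\}$, while on $P$ (dimension $2n+2$) the symplectic locus is $\{\pi_P^{n+1}\neq 0\}$. The key computation is to expand
\begin{equation}\label{eq:toppower}
\pi_P^{\,n+1}=(\pi_M+Z_P\wedge E_M)^{n+1}=(n+1)\,(Z_P\wedge E_M)\wedge \pi_M^{\,n},
\end{equation}
using that $\pi_M$ and $E_M$ involve only the $M$-directions while $Z_P=\partial/\partial t$ is the extra direction, so all terms with two or more factors of $Z_P\wedge E_M$ vanish and the pure $\pi_M^{n+1}$ term vanishes for dimension reasons on the fibre. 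From \eqref{eq:toppower} one reads off that a point $(m,t)\in P$ is symplectic exactly when $E_M\wedge\pi_M^{\,n}$ is nonzero at $m$; hence the singular locus of $P$ is precisely $\mathbf{M}_{sing}\times\Rr$. In particular $P$ is symplectic on a dense open set (since $M$ is contact on a dense open set) and, if $N\subset M$ is a codimension-$1$ submanifold contained in $\mathbf{M}_{sing}$, then $N\times\Rr\subset P$ is a codimension-$1$ submanifold contained in the singular locus of $\pi_P$.

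With this identification in hand the conclusion is immediate: suppose for contradiction that $(M,\pi_M,E_M)$ admits a proper contact resolution. By Proposition~\ref{semiprop} the Poissonification $(P,\pi_P,Z_P)$ then admits a proper homogeneous symplectic resolution, hence in particular a proper symplectic resolution. But $P$ is symplectic on a dense open set and its singular locus contains the codimension-$1$ submanifold $N\times\Rr$, which contradicts the non-existence theorem of \cite{articl}. Therefore no proper contact resolution exists. I expect the main obstacle to be the bookkeeping in \eqref{eq:toppower} and the precise matching of ``codimension $1$'' under the product with $\Rr$: one must check that taking the product with the $\Rr$-factor neither lowers the codimension nor destroys the submanifold structure, and that the singular locus really is the full product $\mathbf{M}_{sing}\times\Rr$ rather than something smaller, which is exactly what \eqref{eq:toppower} guarantees. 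The remaining points (properness and density of the symplectic locus) are handled verbatim by Proposition~\ref{semiprop} and the density hypothesis, so they are routine once the codimension-$1$ transfer is established.
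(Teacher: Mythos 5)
Your proposal is correct and takes essentially the same route as the paper's proof: reduce via Theorem~\ref{équival} and Proposition~\ref{semiprop} to the existence of a proper homogeneous symplectic resolution of the Poissonification, identify the singular locus of $\pi_P$ with $\mathbf{M}_{sing}\times\Rr$, and contradict the non-existence theorem of \cite{articl}. Your inline top-power expansion of $\pi_P^{\,n+1}$ is exactly the content of the paper's Lemma~\ref{mel}, and your dropping of the conformal factor $1/h$ is harmless since $h=\exp(t)$ never vanishes, so the degeneracy locus is unchanged.
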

\begin{proof}[Proof]

Let $(\Sigma,\pi_{\Sigma},E_{\Sigma})$ be a proper contact resolution of the Jacobi manifold $(M,\pi_M,E_M)$ whose singular locus contains a submanifold of codimension $1$. Let $(P,\pi_P,Z_P)$ be the homogeneous Poisson manifold associated with $M$ defined as in Proposition \ref{Poissoni} (poissonification). On the one hand, the Poisson manifold $(P,\pi_P,Z_P)$ admits a proper symplectic resolution according to Proposition \ref{semiprop}. On the other hand, by Proposition \ref{Poissoni}, the Poisson bivector on $P$ is defined by $$\pi_P=\frac{1}{h}\left(\pi_M+Z\wedge E_M \right),$$
where $h :P\rightarrow \Rr$ the homogeneous function of degree $1$ relatively to $Z_P$ defined by
$$ h(t,x)=\exp(t), \quad (t,x)\in P=\Rr\times M.$$

\noindent The bivector $\pi_P$ degenerates on $\mathbf{M}_{sing}\times\Rr$ by the Lemma \ref{mel}. So if the singular locus of $\pi_M$ contains a submanifold of codimension $1$, that of $\pi_P$ also, which contradicts according to \cite{articl} the hypothesis on the existence of a proper symplectic resolution, which completes the proof.

\end{proof}

\begin{thm}\label{4}
Real analytic Jacobi manifolds which are contact over a dense open set and whose singular locus contains a submanifold of codimension $1$ do not admit semi-connected contact resolution.
\end{thm}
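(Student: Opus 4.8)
The plan is to mirror the strategy of Theorem \ref{cont}, replacing "proper symplectic resolution" by "semi-connected symplectic resolution" and invoking the correspondingly stronger non-existence result from \cite{articl} that is available in the real analytic category. Concretely, suppose for contradiction that a real analytic Jacobi manifold $(M,\pi_M,E_M)$, contact on a dense open set and whose singular locus $\mathbf{M}_{sing}$ contains a submanifold of codimension $1$, admits a semi-connected contact resolution $(\Sigma,\alpha,\varphi)$. By Proposition \ref{semiprop} (in its semi-connected, real analytic clause), the poissonification $(P,\pi_P,Z_P)$ then admits a semi-connected homogeneous symplectic resolution. I would then quote the real analytic non-existence theorem of \cite{articl}: a real analytic Poisson manifold whose singular locus contains a hypersurface admits no semi-connected symplectic resolution.

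The key intermediate step is to verify that the hypotheses transfer correctly to $P$. First I would recall from Proposition \ref{Poissoni} that $\pi_P=\frac{1}{h}\left(\pi_M+Z\wedge E_M\right)$ with $h(t,x)=\exp(t)$, so that $P=\Rr\times M$ is again real analytic and $\pi_P$ is a real analytic bivector. Next, using Lemma \ref{mel}, the degeneracy locus of $\pi_P$ is exactly $\mathbf{M}_{sing}\times\Rr$. Hence if $\mathbf{M}_{sing}$ contains a submanifold $N$ of codimension $1$ in $M$, then $N\times\Rr$ is a submanifold of codimension $1$ in $P$ contained in the singular locus of $\pi_P$. This is precisely the obstruction ruled out in the real analytic case by \cite{articl}, yielding the contradiction.

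The one point requiring care, and which I expect to be the main obstacle, is confirming that \emph{semi-connectedness} passes through the poissonification construction in the real analytic setting, since this is exactly the hypothesis distinguishing Theorem \ref{4} from Theorem \ref{cont}. Proposition \ref{semiprop} already asserts the equivalence "semi-connected contact resolution $\Leftrightarrow$ semi-connected homogeneous symplectic resolution," so I would lean on that statement directly; the substance is entirely contained there, together with the observation in its proof that $\Sigma\subset\widetilde{\Sigma}$ meets every connected component (because $Z_P$ is integrable and its flow sweeps $\widetilde{\Sigma}$ out of $\Sigma$), so that $\Sigma$ and $\widetilde{\Sigma}$ share the same connected components and a component lands in $\mathbf{M}_{sing}$ iff the corresponding component of $\widetilde{\Sigma}$ lands in the singular locus of $\pi_P$. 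Granting this, the proof reduces to the three-line argument above.

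\begin{proof}[Proof]
Assume, for contradiction, that a real analytic Jacobi manifold $(M,\pi_M,E_M)$ which is contact on a dense open set and whose singular locus $\mathbf{M}_{sing}$ contains a submanifold $N$ of codimension $1$ admits a semi-connected contact resolution $(\Sigma,\alpha,\varphi)$. By Proposition \ref{semiprop}, the poissonification $(P,\pi_P,Z_P)$ then admits a semi-connected homogeneous symplectic resolution. By Proposition \ref{Poissoni}, $P=\Rr\times M$ is real analytic and
$$\pi_P=\frac{1}{h}\left(\pi_M+Z\wedge E_M\right), \qquad h(t,x)=\exp(t),$$
is a real analytic bivector. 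By Lemma \ref{mel}, the singular locus of $\pi_P$ is exactly $\mathbf{M}_{sing}\times\Rr$, so it contains the codimension $1$ submanifold $N\times\Rr$ of $P$. According to \cite{articl}, a real analytic Poisson manifold whose singular locus contains a submanifold of codimension $1$ admits no semi-connected symplectic resolution, contradicting the existence of the resolution produced above. This completes the proof.
\end{proof}
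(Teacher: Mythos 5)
Your proposal is correct and follows essentially the same route as the paper's own proof: pass to the poissonification via Proposition \ref{semiprop} (semi-connected, real analytic clause), identify the singular locus of $\pi_P$ as $\mathbf{M}_{sing}\times\Rr$ via Lemma \ref{mel} so that it contains the codimension-$1$ submanifold $N\times\Rr$, and conclude by the semi-connected real analytic non-existence theorem of \cite{articl}. Your added remark that semi-connectedness transfers because $\Sigma$ and $\widetilde{\Sigma}$ share connected components (the flow of the homothety field sweeping $\widetilde{\Sigma}$ out of $\Sigma$) is precisely the point established in the proof of Proposition \ref{semiprop}, so nothing is missing.
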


\begin{proof}[Proof]

Let $(\Sigma,\pi_{\Sigma},E_{\Sigma})$ be a semi-connected contact resolution of the real analytic Jacobi manifold (note that this remains true in holomorph) $(M,\pi_M ,E_M)$ whose singular locus contains a submanifold of codimension $1$. Let $(P,\pi_P,Z_P)$ be the homogeneous Poisson manifold associated with $M$ defined as in Proposition \ref{Poissoni} (poissonification). On the one hand, the Poisson manifold $(P,\pi_P,Z_P)$ admits a semi-connected symplectic resolution according to Proposition \ref{semiprop}. On the other hand, by Proposition \ref{Poissoni}, the Poisson bivector on $P$ is defined by $$\pi_P=\frac{1}{h}\left(\pi_M+Z\wedge E_M \right),$$
where $h :P\rightarrow \Rr$ the homogeneous function of degree $1$ relatively to $Z_P$ defined by
$$ h(t,x)=\exp(t), \quad (t,x)\in P=\Rr\times M.$$

\noindent The bivector $\pi_P$ degenerates on $\mathbf{M}_{sing}\times\Rr$ by the Lemma \ref{mel}. So if the singular locus of $\pi_M$ contains a submanifold of codimension $1$, that of $\pi_P$ also, which according to \cite{articl} contradicts the hypothesis on the existence of symplectic semi-connected resolution, which completes the proof.

\end{proof}

\noindent In the holomorphic case, the result is much stronger because the submanifold of codimension $1$ which appears in both theorems \ref{cont} and \ref{4} (which apply by changing what is must) still exists and is still of codimension $1$.

\begin{thm}\label{5*}
Except for contact manifolds, holomorphic Jacobi manifolds do not admit semi-connected contact resolutions.
\end{thm}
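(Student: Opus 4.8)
The plan is to reduce Theorem \ref{5*} to the holomorphic counterpart of Theorem \ref{4}, exploiting the special feature of holomorphic geometry that the zero set of a nonzero holomorphic section is a codimension-$1$ analytic subset. Suppose, for contradiction, that $(M,\pi_M,E_M)$ is a holomorphic Jacobi manifold which is \emph{not} a contact manifold — so its singular locus $\mathbf{M}_{sing}$, namely the vanishing locus of the holomorphic section $s = E_M \wedge \pi_M^n$ of the line bundle $\wedge^{2n+1}TM$, is nonempty — yet which admits a semi-connected contact resolution $(\Sigma,\alpha,\varphi)$.

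First I would show that $s$ does not vanish identically on any connected component of $M$; equivalently, $M$ is contact on a dense open set. Indeed, if $s\equiv 0$ on some connected component $M_0$, then $M_0\subseteq \mathbf{M}_{sing}$. Since $M_0$ is open and closed in $M$ and $\varphi$ is continuous and surjective, $\varphi^{-1}(M_0)$ is a nonempty union of connected components of $\Sigma$, each of which has image contained in $M_0\subseteq\mathbf{M}_{sing}$; this contradicts semi-connectedness. Hence $s\not\equiv 0$ on every component, and by holomorphy its vanishing locus is nowhere dense, so $M$ is contact on a dense open set.

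Next I would use holomorphy to exhibit a genuine codimension-$1$ submanifold inside $\mathbf{M}_{sing}$. Choosing a singular point $m_0$ in a component $M_0$ on which $s\not\equiv 0$, and writing in local holomorphic coordinates $s = h\,\partial_{z_1}\wedge\cdots\wedge\partial_{z_{2n+1}}$ with $h$ holomorphic, $h\not\equiv 0$ but $h(m_0)=0$, one sees that $\mathbf{M}_{sing}$ coincides near $m_0$ with the analytic hypersurface $\{h=0\}$, whose smooth locus is a nonempty complex submanifold of codimension $1$. This is precisely the phenomenon flagged in the remark preceding the statement: unlike in the $C^\infty$ category, here the singular locus is \emph{forced} to carry a codimension-$1$ submanifold. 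I would then invoke the holomorphic analog of Theorem \ref{4} — which, as noted, applies mutatis mutandis through Theorem \ref{équival} and Proposition \ref{semiprop} together with the corresponding non-existence results of \cite{articl} — to conclude that no semi-connected contact resolution can exist, contradicting the hypothesis. Therefore $\mathbf{M}_{sing}=\emptyset$ and $M$ is a contact manifold.

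The main obstacle, and the step deserving care, is the passage through holomorphy in the second and third paragraphs: one must argue rigorously that the vanishing locus of $s$ is genuinely of codimension $1$ (in the smooth category it could be thin or of higher codimension in a way that escapes Theorem \ref{4}), and that a clean codimension-$1$ submanifold can be extracted from the smooth locus of the hypersurface $\{h=0\}$, so that the holomorphic version of Theorem \ref{4} truly applies. Everything else is a bookkeeping combination of semi-connectedness, surjectivity, and the identification $\mathbf{M}_{sing}=\{E_M\wedge\pi_M^n=0\}$ from the conventions.
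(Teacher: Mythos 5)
Your proposal is correct and follows essentially the same route as the paper: both arguments observe that if the holomorphic section $E_M\wedge\pi_M^n$ vanishes at one point without vanishing identically, its zero locus contains (near a regular point of the defining holomorphic function, which the paper extracts via the Weierstrass preparation theorem) a complex submanifold of codimension $1$, and then both conclude by the holomorphic version of Theorem \ref{4}. Your additional first paragraph, using semi-connectedness to rule out components on which $E_M\wedge\pi_M^n\equiv 0$ and thereby securing the ``contact on a dense open set'' hypothesis of Theorem \ref{4}, is a point the paper's proof leaves implicit, so it tightens rather than changes the argument.
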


\begin{proof}[Proof]
The Jacobi structure $(\pi_M,E_M)$ of a non-contact holomorphic Jacobi manifold $M$ of dimension $2n+1$ degenerates to a singular point $m\in M$ if the product exterior $\pi_M^n\wedge E_M$ vanishes at this point. But if it vanishes at at least one point, i.e. if $M$ is not a contact manifold, then the exterior product $\pi_M^n\wedge E_M$ vanishes along a submanifold of codimension $1$ (by Weierstrass preparation theorem which says that the singular locus of any holomorphic function contains regular points, around which the singular locus is a submanifold of codimension $1$). Theorem \ref{4} allows to conclude that no semi-connected contact resolution exists.
\end{proof}

In the smooth case, there are also conditions on the vector field $E$ which make it possible to affirm that certain contact structures do not admit proper or semi-connected resolution.

\begin{prop}
A Jacobi manifold $ (M,\pi,E)$ of contact on a dense open set such that $E$ is an Euler type vector field in the neighborhood of a point cannot admit proper or semi-connected contact resolution.
\end{prop}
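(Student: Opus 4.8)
The plan is to verify the hypothesis of Theorems \ref{cont} and \ref{4} --- namely that the singular locus $\mathbf{M}_{sing}$ contains a submanifold of codimension $1$ --- and then to invoke those theorems directly. The whole argument is local near the point $m$ at which $E$ is of Euler type, so I would choose coordinates $(x_1,\dots,x_{2n+1})$ centered at $m$ in which $E=\sum_{i=1}^{2n+1}x_i\,\partial_{x_i}$ is the radial Euler field (this is the reading of ``Euler type'' that makes the statement work; see the obstruction below).

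The first step is to exploit the Jacobi compatibility $[E,\pi]=0$. Writing $E\wedge\pi^n=h\,\partial_{x_1}\wedge\cdots\wedge\partial_{x_{2n+1}}$ for a smooth function $h$, I would compute the Lie derivative along $E$: since $L_E\pi=[E,\pi]=0$ and $L_EE=0$ one gets $L_E(E\wedge\pi^n)=0$, whereas $L_E(\partial_{x_1}\wedge\cdots\wedge\partial_{x_{2n+1}})=-(2n+1)\,\partial_{x_1}\wedge\cdots\wedge\partial_{x_{2n+1}}$. Comparing the two expressions yields $E(h)=(2n+1)\,h$, so that $h$ is homogeneous of degree $2n+1$; being smooth and homogeneous of positive integer degree, $h$ is in fact a homogeneous polynomial, so the structure is real analytic (indeed polynomial) in this chart. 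Because $(\pi,E)$ is contact on a dense open set, $h\not\equiv 0$, and by the definition of singular points $\mathbf{M}_{sing}=\{h=0\}$ near $m$.

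The key point, and the step I expect to be the main obstacle, is to show that $\{h=0\}$ genuinely contains a submanifold of codimension $1$ rather than some smaller set (as would happen, for instance, for $x^2+y^2$). Here the oddness of $\dim M=2n+1$ is decisive: since $h$ is homogeneous of the odd degree $2n+1$, we have $h(-x)=-h(x)$, so $h$ takes both signs and the open sets $\{h>0\}$ and $\{h<0\}$ are both nonempty. Consequently $\{h=0\}$ separates them and must have dimension exactly $2n$; as the zero set of a nonzero homogeneous polynomial, its smooth locus $\{h=0,\ dh\neq 0\}$ is then a nonempty codimension $1$ submanifold contained in $\mathbf{M}_{sing}$. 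For the existence of a regular codimension $1$ point one may argue as in the proof of Theorem \ref{5*}, the genuinely new input being that the sign change forces the zero set to be of codimension $1$ and not higher --- which is exactly where the radial (unweighted) nature of $E$ is used, since a weighted Euler field would only give weighted homogeneity, under which $h$ need not change sign.

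Finally, with a codimension $1$ submanifold inside $\mathbf{M}_{sing}$ in hand, Theorem \ref{cont} rules out a proper contact resolution. For the semi-connected case I would invoke Theorem \ref{4}: although it is stated in the real analytic category, the computation above has just shown that the Jacobi structure is real analytic (polynomial) in a neighborhood of $m$, precisely where the codimension $1$ singular cone sits, so the same obstruction applies and no semi-connected contact resolution can exist either. The delicate bookkeeping, on which I would spend the most care, is to confirm that this local analyticity near $m$ is enough to feed Theorem \ref{4} --- restricting the putative resolution to the preimage of a neighborhood of $m$ and checking that properness (and, with more care, semi-connectedness) is inherited by the restriction.
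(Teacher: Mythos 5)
Your proposal is correct and follows essentially the same route as the paper's own proof: the identical Lie-derivative computation $L_E(E\wedge \pi^n)=0$ yielding $E[h]=(2n+1)h$, hence a homogeneous polynomial of odd degree $2n+1$ whose sign change forces a nonempty codimension-$1$ submanifold inside $\mathbf{M}_{sing}$, after which Theorem \ref{cont} applies. If anything you are slightly more careful than the paper, which relegates the sign-change argument to the subsequent remark and explicitly concludes only the proper case, leaving the semi-connected case (and the local-analyticity bookkeeping needed to invoke Theorem \ref{4}) implicit.
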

\begin{proof}
As $(\pi,E) $ is contact on a dense open set, the dimension of the manifold is an odd integer.
Consider local coordinates in which
  $ E = \sum_{i=1}^{2n+1} x_i \frac{\partial}{\partial x_i} $. We have
   $$ \pi^n \wedge E = P\, \Lambda$$
   where $\Lambda = \frac{\partial}{\partial x_1} \wedge \cdots \wedge \frac{\partial}{\partial x_{2n+1}} $ and where $ P $ is a function. As $L_E \Lambda = -(2n+1) \Lambda $, we have:
    $$ 0 = L_E (\pi^n \wedge E) = \left(E[P] -(2n+1)\, P\right) \Lambda $$
This implies that $E[P] =(2n+1)P$, in other words, that $P$ is an homogeneous polynomial of degree $ 2n+1$. For an homogeneous polynomial of odd degree on $\mathbb R $, $P=0 $ contains a submanifold of codimension $1$. This submanifold being exactly the singular locus of $ (E,\pi)$, Theorem \ref{cont} allows us to conclude that there is no proper contact resolution.
\end{proof}

\begin{rema}
\normalfont
The proof above shows that if proper resolution exists, then the vector field $E$ cannot be
of the Euler type with weight: $\sum_{i=1}^{2n+1} \omega_i x_i \frac{\partial}{\partial x_i} $, with $\omega_i \in \mathbb N^*, $
only if the sum of the weights $\sum_{i=1}^{2n+1} \omega_i $ is a positive even integer.
  If it is an odd integer, then the function is polynomial and changes sign, which implies that there is a submanifold of codimension $1$ where it is zero.
\end{rema}

In the remainder of this section we introduce examples of bivector fields and vector fields on $\Rr^n$ which we will prove that they define a Jacobi structure if we impose additional conditions on them. Then we describe large classes of Jacobi manifolds which cannot admit proper contact resolutions. Some of the statements in this section relate to real analytic and holomorphic Jacobi manifolds, some relate to the smooth case. The context will be clearly indicated if required.
\begin{prop}\label{pipi1}
We consider on $M=\Rr^{2m+1}$ equipped with coordinates $x,z,y_1,\ldots,y_{2m-1}$, the bivector field defined by $$\pi_M=\left(\sum_ {i=1}^{2m-1}f(y_i)\frac{\partial}{\partial y_i}\right)\wedge\frac{\partial}{\partial z}-\left(\sum_{i =1}^{2m-1}y_i^nf(y_i)\frac{\partial}{\partial y_i}\right)\wedge\frac{\partial}{\partial x},$$
and the vector field defined by $$E_M=\left(\sum_{i=1}^{2m-1}g(y_i)\right)\frac{\partial}{\partial x}+\left(\ sum_{i=1}^{2m-1}h(y_i)\right)\frac{\partial}{\partial z}.$$
where $n$ is a positive integer and $f,g,h$ are functions on $\Rr$. For any choice of $f$, there exists $g$ and $h$ such that the bivector $\pi_M$ and the vector field $E_M$ define a Jacobi structure on $\Rr^{2m+1}$ if $n \geqslant 2$ and the function $f$ is polynomial without constant term or if $n=1$ and $f$ is any polynomial.
\end{prop}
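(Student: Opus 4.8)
The plan is to translate the two defining conditions of a Jacobi structure, $[\pi_M,\pi_M]=2\,E_M\wedge\pi_M$ and $[E_M,\pi_M]=0$, into explicit relations between the one–variable functions $f$, $g$, $h$, and then to solve those relations for $g$ and $h$ once $f$ is given. Writing $A=\sum_{i}f(y_i)\frac{\partial}{\partial y_i}$ and $B=\sum_{i}y_i^{n}f(y_i)\frac{\partial}{\partial y_i}$, so that $\pi_M=A\wedge\frac{\partial}{\partial z}-B\wedge\frac{\partial}{\partial x}$, I would first exploit that $\frac{\partial}{\partial x}$ and $\frac{\partial}{\partial z}$ commute with every field depending only on the $y_j$. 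By the Leibniz rule for the Schouten--Nijenhuis bracket this kills all but one term, leaving $\tfrac12[\pi_M,\pi_M]=-[A,B]\wedge\frac{\partial}{\partial z}\wedge\frac{\partial}{\partial x}$, and a direct computation gives $[A,B]=\sum_{i}n\,y_i^{n-1}f(y_i)^2\frac{\partial}{\partial y_i}$. The bracket $[E_M,\pi_M]$ is computed the same way. Comparing the coefficients of the independent trivectors $\frac{\partial}{\partial y_i}\wedge\frac{\partial}{\partial z}\wedge\frac{\partial}{\partial x}$, and of $\frac{\partial}{\partial z}\wedge\frac{\partial}{\partial x}$, converts the two tensorial identities into scalar equations; after factoring out $f$ and grouping the contribution attached to each variable $y_i$, these reduce on the open dense set $\{f\neq0\}$ to the pair of one–variable relations
\begin{equation*}
g(y)+y^{n}h(y)=-n\,y^{n-1}f(y),\qquad g'(y)+y^{n}h'(y)=0 .
\end{equation*}

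The second step is to solve this pair. Differentiating the first relation and substituting the second to eliminate $g'$ gives $n\,y^{n-1}h(y)=-n(n-1)\,y^{n-2}f(y)-n\,y^{n-1}f'(y)$, hence
\begin{equation*}
h(y)=-f'(y)-(n-1)\,\frac{f(y)}{y},\qquad g(y)=-n\,y^{n-1}f(y)-y^{n}h(y) .
\end{equation*}
By construction this solves both relations; I would then substitute $g,h$ back into $[\pi_M,\pi_M]-2E_M\wedge\pi_M$ and $[E_M,\pi_M]$ to confirm that the full tensorial identities hold and that no cross–term between distinct variables survives.

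The decisive point — and the only real obstacle — is the regularity of $h$ at $y=0$: the formula above is a bona fide polynomial precisely when $(n-1)f(y)/y$ is, and $g$ is then automatically polynomial, being a polynomial combination of $f$ and $h$. If $n=1$ the singular term vanishes and $h=-f'$ is polynomial for every polynomial $f$; if $n\geq2$ one needs $y$ to divide $f$, i.e. $f$ with no constant term, so that $f(y)/y$ is polynomial. These are exactly the two hypotheses, so under either of them the constructed $g,h$ are polynomial and $(\pi_M,E_M)$ is a genuine Jacobi structure on all of $\Rr^{2m+1}$. I expect the computational bookkeeping of the Schouten bracket, together with the check that the equations genuinely decouple variable by variable, to absorb most of the effort; once the two scalar relations are established, the remaining argument is the elementary manipulation of the relations and the divisibility observation above.
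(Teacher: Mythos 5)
Your proposal is correct where the paper's proof is correct, and it follows the same overall route: both arguments reduce the two identities $[\pi_M,\pi_M]=2E_M\wedge\pi_M$ and $[E_M,\pi_M]=0$ to the one-variable linear system $g(t)+t^nh(t)=-nt^{n-1}f(t)$, $g'(t)+t^nh'(t)=0$ (the paper's system (\ref{jojo1})), and both end with the same dichotomy (any polynomial $f$ when $n=1$; $f$ without constant term when $n\geqslant 2$). The difference is in how the system is solved. The paper invokes linearity to reduce to monomials $f=t^k$ and makes an undetermined-coefficients ansatz ($g=at^k$, $h=bt^{k-1}$ for $n=1$; $g=at^{k+n-1}$, $h=bt^{k-1}$ for $n\geqslant2$), solving a small linear system in $(a,b)$ in each case. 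You instead differentiate the first relation and eliminate $g'$ via the second, obtaining the closed-form solution $h=-f'-(n-1)f/y$, $g=-y^{n-1}f+y^{n}f'$, valid for arbitrary $f$ at once. Your route buys something: it unifies the two cases, shows the solution is essentially forced on $\{y\neq0,\,f\neq0\}$ (so the divisibility condition $y\mid f$ for $n\geqslant2$ is seen to be \emph{necessary} for a polynomial solution, not merely sufficient, which also explains the paper's closing remark that constant $f$ fails), and it specializes exactly to the paper's coefficients ($a=k-1$, $b=-k$ when $n=1$; $a=k-1$, $b=-(k+n-1)$ when $n\geqslant2$), so the two solutions agree.

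One step deserves a concrete warning, although it affects the paper's own proof in exactly the same way: the claimed variable-by-variable decoupling of $[\pi_M,\pi_M]=2E_M\wedge\pi_M$ is literally correct only when there is a single $y$-variable, i.e.\ $m=1$. Since the coefficients of $E_M$ are the \emph{full} sums $\sum_i g(y_i)$ and $\sum_i h(y_i)$, the coefficient of $\frac{\partial}{\partial x}\wedge\frac{\partial}{\partial y_j}\wedge\frac{\partial}{\partial z}$ in $2E_M\wedge\pi_M$ is $2f(y_j)\bigl(\sum_i g(y_i)+y_j^{n}\sum_i h(y_i)\bigr)$, so for $m\geqslant2$ the cross-terms $f(y_j)\bigl(g(y_i)+y_j^{n}h(y_i)\bigr)$, $i\neq j$, do survive; the verification you promise, ``that no cross-term between distinct variables survives,'' would in fact fail. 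For instance with $n=1$, $f(t)=t$, your formulas give $g=0$, $h=-1$, and the coefficient above becomes $-2(2m-1)y_j^{2}$ instead of the required $-2y_j^{2}$; one can check further that for $m\geqslant2$ the identity forces $g,h$ constant and $f$ of degree at most one, so the statement for general $f$ only holds verbatim for $m=1$. The paper's system (\ref{jojo}) is written per-variable and silently makes the identical reduction, so relative to the paper your attempt is faithful; but as a freestanding argument both proofs really establish the proposition on $\Rr^{3}$, which is the only case used in the paper's Examples \ref{1}--\ref{3}.
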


\begin{proof}[Proof]
A direct calculation shows that
\begin{equation*}
\begin{split}
\left[\pi_M,\pi_M\right] & = 2\sum_{i=1}^{2m-1}\left(\left(\left\{x,\left\{y_i,z\right\}\right\}+\left\{y_i,\left\{z,x\right\}\right\}+\left\{z,\left\{x,y_i\right\}\right\}\right)\frac{\partial}{\partial x}\wedge\frac{\partial}{\partial y_i}\wedge\frac{\partial}{\partial z}\right)\\
 & = -2n\left(\sum_{i=1}^{2m-1}y_i^{n-1}f^2(y_i)\frac{\partial}{\partial x}\wedge\frac{\partial}{\partial y_i}\wedge\frac{\partial}{\partial z}\right),
\end{split}
\end{equation*}
 and
 \begin{equation*}
  \left[\pi_M,\pi_M\right] =  2E_M\wedge\pi_M=-2n\left(\sum_{i=1}^{2m-1}y_i^{n-1}f^2(y_i)\frac{\partial}{\partial x}\wedge\frac{\partial}{\partial y_i}\wedge\frac{\partial}{\partial z}\right).
  \end{equation*}

Also, $\left[E_M,\pi_M\right]=\left(g'(y_i)f(y_i)+y_i^nh'(y_i)\right)\frac{\partial}{\partial x}\wedge\frac{\partial}{\partial z}=0$.
The two identities $[\pi_M,\pi_M] = 2E_M\wedge\pi_M$ and $[E_M,\pi_M] = 0$ impose that the functions $g$ and $h$ must satisfy the relations for all $i=1 ,...,m$,

\begin{equation}
\label{jojo}
\left \{
\begin{array}{c @{=} c} 
   g(y_i)f(y_i) + f(y_i)h(y_i)y_i^n & -ny_i^{n-1}f^2(y_i) \\
   g'(y_i)f(y_i) + y_i^n h'(y_i)f(y_i) & 0. \\
\end{array}
\right.
\end{equation}
Which is equivalent to the linear system,
\begin{equation}
\label{jojo1}
\left \{
\begin{array}{c @{=} c} 
   g(t) + h(t)t^n & -nt^{n-1}f(t) \\
   g'(t) + t^n h'(t) & 0 .\\
\end{array}
\right.
\end{equation}
\noindent By linearity of (\ref{jojo1}), it suffices to show the existence of a solution for $f$ a monomial function.

\noindent Let us take the case $n=1$, we consider $f(t)=t^k$, $g(t)=at^k$ and $h(t)=bt^{k-1}$. Replace the functions $g,h$ and $f$ in (\ref{jojo1}) by these polynomials gives
\begin{equation*}
\left \{
\begin{array}{c @{=} c} 
   a + b & -1 \\
   ka + (k-1)b & 0 \\
\end{array}
\right.
\end{equation*}
\noindent It is easy to see that this system admits at least one solution for any integer $k \geqslant 0$. If $k=0$ we have $b=0$ and $h(t)=0$, so there is no term proportional to $\frac{1}{t}$. This implies that the system (\ref{jojo1}) admits a polynomial solution.

\noindent Concerning the case $n \geqslant 2$, we will prove the existence of solution for any polynomial $f$ without the constant term. By linearity of (\ref{jojo1}), it suffices to show the existence of a solution for $f$ a monomial. We consider $f(t)=t^{k}$, $g(t)=at^{k+n-1} $ and $ h(t)=bt^{k-1}$ for any integer $ k \geqslant n$. Replacing the functions $g,h$ and $f$ in (\ref{jojo1}) by these polynomials gives,

\begin{equation*}
\left \{
\begin{array}{c @{=} c} 
   a + b & -n \\
   (k+n-1)a + (k-1)b & 0 \\
\end{array}
\right.
\end{equation*}

\noindent This system of equations always admits a unique solution. Since $k\geqslant 1$, $h(t)$ is indeed a polynomial and therefore for any polynomial $f$ without the constant term the system (\ref{jojo1}) admits a solution.

\noindent If we assume that $f$ is a constant function, it is easy to see that the system does not admit a solution for all functions $g,h$.
\end{proof}
\begin{rema}\normalfont
If the function $f$ is a polynomial of degree $1$, then the functions $g$ and $h$ are constant functions which makes the vector field $E_M$ non-degenerate at any point of the considered manifold $M$ .
\end{rema}

\noindent To make things clearer, we give a series of examples.
\begin{exm}\normalfont \label{1}
Equip $\Rr^3$ with the bivector field $\pi$ defined by
$$\pi=(2+3y)\frac{\partial}{\partial y}\wedge\frac{\partial}{\partial z}+y(2+3y)\frac{\partial}{\partial x}\wedge\frac{\partial}{\partial y},$$
and the vector field $E$ defined by
$$E=-2\frac{\partial}{\partial x}-3\frac{\partial}{\partial z}.$$
In this example, we have $f(y)=2+3y$ so a direct calculation shows that $$\left[\pi,\pi\right]=-2f^2\frac{\partial}{\partial x }\wedge\frac{\partial}{\partial y}\wedge\frac{\partial}{\partial z}=2(-9y^2-4-12y)\frac{\partial}{\partial x} \wedge\frac{\partial}{\partial y}\wedge\frac{\partial}{\partial z}, \quad \left[E,\pi\right]=0.$$
\end{exm}
\begin{exm}\normalfont \label{2}
Equip $\Rr^3$ with the bivector field $\pi$ defined by
$$\pi=(y^3+y^2+y)\frac{\partial}{\partial y}\wedge\frac{\partial}{\partial z}+y(y^3+y^2 +y)\frac{\partial}{\partial x}\wedge\frac{\partial}{\partial y},$$
and the vector field $E$ defined by
$$E=(2y^3+y^2)\frac{\partial}{\partial x}+(-3y^2-2y-1)\frac{\partial}{\partial z}.$$
In this example, we have $f(y)=y^3+y^2+y$ so a direct calculation shows that $$\left[\pi,\pi\right]=-2f^2\frac{\partial}{\partial x}\wedge\frac{\partial}{\partial y}\wedge\frac{\partial}{\partial z}, \quad \left[E,\pi\right]=0.$$
\end{exm}

\begin{exm}\normalfont\label{3}
Equip $\Rr^3$ with the field of bivectors $\pi$ defined by
$$\pi=(y^2+y+1)\frac{\partial}{\partial y}\wedge\frac{\partial}{\partial z}+y(y^2+y+1)\ frac{\partial}{\partial x}\wedge\frac{\partial}{\partial y},$$
and the vector field $E$ defined by
$$E=(y^2-1)\frac{\partial}{\partial x}+(-2y-1)\frac{\partial}{\partial z}.$$
In this example, we have $f(y)=y^2+y+1$ so a direct calculation shows that $$\left[\pi,\pi\right]=-2f^2\frac{\partial} {\partial x}\wedge\frac{\partial}{\partial y}\wedge\frac{\partial}{\partial z}, \quad \left[E,\pi\right]=0.$$
\end{exm}

\begin{rema}
\normalfont
When $f$ is a constant function in the previous examples and $n=1$, then we obtain a contact structure.
\end{rema}
\begin{prop}
The Jacobi manifolds of Examples \ref{1},\ref{2} and \ref{3} contain a submanifold of codimension $1$ in their singular loci if and only if the polynomial $f$ admits a zero. 
\end{prop}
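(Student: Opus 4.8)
The plan is to show that for each of these examples the singular locus is exactly the zero set of $f$ in the variable $y$, and then to observe that such a zero set contains a codimension $1$ submanifold precisely when $f$ has a real root. Since Examples~\ref{1}, \ref{2} and~\ref{3} are all the case $m=1$, $n=1$ of Proposition~\ref{pipi1}, the bivector and vector field take the form
\begin{equation*}
\pi = f(y)\,\frac{\partial}{\partial y}\wedge\frac{\partial}{\partial z} - y\,f(y)\,\frac{\partial}{\partial y}\wedge\frac{\partial}{\partial x},\qquad E = g(y)\,\frac{\partial}{\partial x} + h(y)\,\frac{\partial}{\partial z},
\end{equation*}
where, by the first equation of the system~\eqref{jojo1} taken at $n=1$, the functions $g$ and $h$ satisfy $g(y) + y\,h(y) = -f(y)$. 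I would record this relation first, since it is what makes the computation collapse.

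Next I would compute the regularity criterion. In dimension $3$ we have $n=1$, so a point is singular exactly when $E\wedge\pi = 0$. Writing $\Omega := \frac{\partial}{\partial x}\wedge\frac{\partial}{\partial y}\wedge\frac{\partial}{\partial z}$ for the canonical volume trivector and expanding, only two of the four terms survive (the others carry a repeated $\frac{\partial}{\partial x}$ or $\frac{\partial}{\partial z}$), giving
\begin{equation*}
\pi\wedge E = f(y)\bigl(g(y) + y\,h(y)\bigr)\,\Omega = -f(y)^2\,\Omega,
\end{equation*}
where the last equality uses $g + y\,h = -f$. Hence the singular locus is exactly $\{(x,y,z)\in\Rr^3 : f(y)=0\}$.

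To conclude I would run a short case analysis on the real roots of $f$. If $f$ admits a real root $y_0$, then the plane $\{y=y_0\}$ is a submanifold of $\Rr^3$ of codimension $1$ contained in the singular locus. Conversely, if $f$ has no real root, then $f(y)^2 > 0$ for every $y$, so $\pi\wedge E$ never vanishes, the singular locus is empty, and it can contain no submanifold at all. Since $f$ is a polynomial in the single variable $y$, its zero set is a finite union of parallel planes $\{y=y_i\}$, each of codimension $1$; thus the singular locus contains a submanifold of codimension $1$ if and only if $f$ has a root, which is the claim.

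The calculation is elementary, so there is no genuine obstacle; the one point requiring care is to invoke the Jacobi constraint $g + y\,h = -f$ to reduce $\pi\wedge E$ to $-f(y)^2\,\Omega$ rather than leaving it expressed through $g$ and $h$. It is also worth stressing that, in the smooth and real analytic setting, \emph{``admits a zero''} must mean a real root: this is precisely what separates Example~\ref{3}, where $f=y^2+y+1$ has no real root and the structure is therefore contact on all of $\Rr^3$, from Examples~\ref{1} and~\ref{2}, where $f$ has a real root and a codimension $1$ singular plane appears.
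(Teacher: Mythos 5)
Your proof is correct and takes essentially the same route as the paper, whose own proof simply declares the result obvious and observes that the planes $\{y=y_0\}$, with $y_0$ a zero of $f$, lie in the singular locus. Your computation $\pi\wedge E = f(y)\bigl(g(y)+y\,h(y)\bigr)\,\Omega = -f(y)^2\,\Omega$, using the constraint from \eqref{jojo1} at $n=1$, is precisely the verification the paper omits, and it additionally makes explicit the converse direction (the singular locus is empty when $f$ has no real root, as in Example \ref{3}), which the paper leaves implicit.
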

\noindent The proof is obvious. The submanifolds $\{y=y_0\}$ with $y_0$ being a zero of $f$ is contained in the singular locus.

\noindent
\begin{coro}
The Jacobi manifolds of Proposition \ref{pipi1} have no proper contact resolution (resp, semi-connected) unless $n=1$ and $f$ is constant, in which case they are themselves contact.
\end{coro}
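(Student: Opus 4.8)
The plan is to locate the singular locus of the family in Proposition \ref{pipi1} explicitly and then feed it into the codimension-one criterion of Theorems \ref{cont} and \ref{4}. Writing $\pi_M=\sum_i \partial_{y_i}\wedge V_i$ with $V_i=f(y_i)\left(\partial_z - y_i^n\,\partial_x\right)$, I would first compute the top-degree object $\pi_M^m\wedge E_M$, whose zero set is by definition $\mathbf{M}_{sing}$. The key structural observation is that every $V_i$, and also $E_M$, lies in the two-dimensional plane spanned by $\partial_x$ and $\partial_z$; since $V_i\wedge V_j$ is always proportional to $\partial_z\wedge\partial_x$, a short multilinear computation gives that each term of $\pi_M^m$ already carries the factor $\partial_z\wedge\partial_x$ once $m\ge 2$, so that $\pi_M^m\wedge E_M\equiv 0$ identically for $m\ge 2$.

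This settles the first case at once: for $m\ge 2$ the structure is degenerate at every point, hence it is not contact on a dense open set, and by the corollary asserting that a connected separable contact resolution forces its base to be contact on a dense open set, no separable resolution — in particular no proper and no semi-connected one — can exist. For $m=1$ the manifold has dimension three, and here I would use the Jacobi relation $g(y)+y^n h(y)=-n\,y^{n-1}f(y)$ coming from (\ref{jojo1}) to obtain the clean formula $\pi_M\wedge E_M=-n\,y^{n-1}f(y)^2\,\partial_y\wedge\partial_z\wedge\partial_x$, so that $\mathbf{M}_{sing}=\{n\,y^{n-1}f(y)^2=0\}$ and the structure is contact on the dense open complement.

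The case analysis in dimension three is then immediate. If $n\ge 2$, the singular locus contains the plane $\{y=0\}$, a submanifold of codimension one (consistent with the hypothesis $f(0)=0$ of Proposition \ref{pipi1}); if $n=1$, it equals $\{f=0\}$, which contains a codimension-one submanifold exactly when $f$ has a real root, as recorded for the examples just above. In either of these subcases the base is contact on a dense open set with a codimension-one stratum of singular points, so Theorem \ref{cont} excludes a proper contact resolution and Theorem \ref{4} — applicable since $f$ is polynomial, hence real analytic — excludes a semi-connected one. The remaining subcase is $n=1$ with $f$ nowhere vanishing (the prototype being $f$ a nonzero constant, and the only option over $\Cc$): then $\pi_M\wedge E_M$ is a nonvanishing volume form, so $(M,\pi_M,E_M)$ is itself a contact manifold and the identity is a proper semi-connected resolution.

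The step I expect to be the main obstacle, and the one I would verify most carefully, is the degeneracy identity $\pi_M^m\wedge E_M\equiv 0$ for $m\ge 2$: it is what collapses the genuinely contact-on-a-dense-open-set content of the corollary into dimension three and dictates that one must invoke the dense-open-set corollary rather than Theorems \ref{cont}--\ref{4} in higher dimension. The secondary subtlety is the borderline $n=1$ analysis, where the presence of a codimension-one stratum in the singular locus is precisely equivalent to $f$ admitting a root, which is why the exceptional ``itself contact'' case is naturally phrased through the vanishing of $f$.
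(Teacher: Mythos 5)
Your proof is correct, and for the core case it follows essentially the route the paper intends: the corollary is stated without proof, immediately after the proposition noting that the planes $\{y=y_0\}$, with $y_0$ a zero of $f$, lie in the singular locus, so the implicit argument is exactly your third paragraph --- exhibit a codimension-one submanifold of $\mathbf{M}_{sing}$ and invoke Theorem \ref{cont} for proper resolutions and Theorem \ref{4} for semi-connected ones (legitimately, since $f,g,h$ are polynomial, hence the structure is real analytic). You go beyond the paper in three ways, all of them improvements. First, your identity $\pi_M\wedge E_M=-n\,y^{n-1}f(y)^2\,\partial_y\wedge\partial_z\wedge\partial_x$, obtained from the relation $g+y^nh=-n y^{n-1}f$ of (\ref{jojo1}), determines the singular locus exactly rather than merely exhibiting planes inside it, and the factor $y^{n-1}$ cleanly handles $n\geq 2$ even without using $f(0)=0$. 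Second, the paper's implicit argument silently assumes $m=1$; your observation that all the $V_i$ and $E_M$ lie in the span of $\partial_x,\partial_z$, so that $\pi_M^m\wedge E_M\equiv 0$ for $m\geq 2$ (indeed $\pi_M=A\wedge\partial_z-B\wedge\partial_x$ has rank at most $4$), is correct, and replacing Theorems \ref{cont}--\ref{4} by the dense-open-set corollary is the right move there, since those theorems require the structure to be contact on a dense open set, which fails identically for $m\geq 2$. Third, your analysis shows the statement itself is imprecise: the correct exceptional condition is ``$n=1$ and $f$ without real zeros,'' not ``$f$ constant'' --- the paper's own Example \ref{3}, with $f=y^2+y+1$, gives $\pi\wedge E=-f^2\,\partial_y\wedge\partial_z\wedge\partial_x$ nowhere vanishing, hence a contact manifold admitting the identity as a proper, semi-connected resolution, contradicting the literal corollary; your phrasing (with constants as the prototype, and the only option in the holomorphic setting) repairs this, and likewise your remark that the ``themselves contact'' clause can only refer to $m=1$. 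The single loose end is the degenerate choice $f\equiv 0$, where (\ref{jojo1}) forces $g=h=0$, so $\pi=0$ and $E=0$ and non-existence again follows from the dense-open-set corollary; one sentence would close it.
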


\section{Appendix.}\label{appen}

\subsection{Homogeneous structures and poissonification}

\noindent We provide the definition of "homogeneous symplectic resolution" that we need throughout this article.

\begin{defi} Let $(M,\pi_M,Z_M)$ be an homogeneous Poisson manifold.
An homogeneous symplectic resolution of $(M,\pi_M,Z_M)$ is a symplectic resolution $(P,\pi_P,\varphi)$ such that $P$ is homogeneous via a vector field $Z_P$ on $P$ which can be projected by $\varphi$ onto $M$ and has a projection of $Z_M$.
\end{defi}

\noindent We give here a classical result (see \cite{slvj1}, for more details) useful for this article, and which allows us to construct an homogeneous Poisson manifold from a Jacobi manifold.

\begin{prop}\label{Poissoni}
Let $(M,\pi_M,E_M)$ be a Jacobi manifold. We set $$P=\Rr\times M.$$
We note $t$ the canonical coordinate on the factor $\Rr$, and $Z = \frac{\partial}{\partial t}$. Let $h :P\rightarrow \Rr$ be the homogeneous function of degree $1$ relatively to $Z$ defined by
$$ h(t,x)=\exp(t),\ \text{for all} \ (t,x)\in P=\Rr\times M.$$
We define on $P$ a bivector $\pi_P$ by setting $$\pi_P=\frac{1}{h}\left(\pi_M+Z\wedge E_M \right).$$
The triple $(P,\pi_P,Z_P)$ is an homogeneous Poisson manifold.
\end{prop}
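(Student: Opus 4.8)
The plan is to verify directly the two conditions in the definition of a homogeneous Poisson manifold: that $\pi_P$ is Poisson, i.e. $[\pi_P,\pi_P]=0$ for the Schouten--Nijenhuis bracket, and that the homothety relation $L_{Z_P}\pi_P=\pi_P$ holds for a complete field $Z_P$. Throughout I would write $\tilde\pi:=\pi_M+Z\wedge E_M$, so that $\pi_P=h^{-1}\tilde\pi$ with $h=\exp(t)$, and regard $\pi_M,E_M$ as $t$-independent multivector fields on $P=\Rr\times M$ pulled back from $M$. The facts used repeatedly are the Jacobi identities $[\pi_M,\pi_M]=2E_M\wedge\pi_M$ and $[E_M,\pi_M]=0$, together with the triviality of the Lie derivatives $L_Z\pi_M=0$ and $L_ZE_M=0$, both coefficients being independent of $t$. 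I fix the standard conventions $[\pi,f]=\iota_{df}\pi$ for a bivector $\pi$ and the graded Leibniz rule $[P,fQ]=[P,f]\wedge Q+f[P,Q]$.

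For the Poisson property I would first record the conformal-rescaling identity for the Schouten bracket of a bivector: for any function $f$,
\[
[f\tilde\pi,\,f\tilde\pi]=f^2\,[\tilde\pi,\tilde\pi]+2f\,[\tilde\pi,f]\wedge\tilde\pi,
\]
which follows from the graded Leibniz rule and graded antisymmetry together with $[\tilde\pi,f]=\iota_{df}\tilde\pi$. It then remains to compute the two ingredients at $f=h^{-1}=e^{-t}$. Expanding $[\tilde\pi,\tilde\pi]=[\pi_M,\pi_M]+2[\pi_M,Z\wedge E_M]+[Z\wedge E_M,Z\wedge E_M]$ and applying Leibniz, every cross term reduces to brackets among $\pi_M,Z,E_M$ that vanish by $[E_M,\pi_M]=0$, $L_Z\pi_M=0$, $L_ZE_M=0$; hence $[\tilde\pi,\tilde\pi]=[\pi_M,\pi_M]=2E_M\wedge\pi_M$. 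Next, since $d(e^{-t})=-e^{-t}\,dt$ and $\iota_{dt}\tilde\pi=E_M$ (because $\iota_{dt}\pi_M=0$ while $\iota_{dt}(Z\wedge E_M)=E_M$), I get $[\tilde\pi,f]=\iota_{df}\tilde\pi=-e^{-t}E_M$. Substituting,
\[
[\pi_P,\pi_P]=e^{-2t}\,(2E_M\wedge\pi_M)+2e^{-t}(-e^{-t}E_M)\wedge\tilde\pi=2e^{-2t}E_M\wedge\pi_M-2e^{-2t}E_M\wedge\pi_M=0,
\]
the contribution $E_M\wedge Z\wedge E_M$ dropping out because $E_M$ occurs twice. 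This is exactly the step where the Jacobi identities are indispensable, and it also explains the weight: any conformal factor other than $e^{-t}$ would leave a nonzero multiple of $E_M\wedge\pi_M$.

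Finally, homogeneity is immediate: since $\tilde\pi$ is $Z$-invariant, $L_Z\pi_P=(L_Ze^{-t})\,\tilde\pi=-e^{-t}\tilde\pi=-\pi_P$, so the complete field realizing the convention $L_{Z_P}\pi_P=\pi_P$ of the Definition is $Z_P=-\partial/\partial t$ (equivalently $+\partial/\partial t$ under the opposite homogeneity sign convention). I expect the only genuine obstacle to be the sign bookkeeping in the graded Leibniz rule and graded antisymmetry when establishing the rescaling identity and the vanishing of the cross terms; once those signs are pinned down, the cancellation in $[\pi_P,\pi_P]$ and the scaling computation are routine.
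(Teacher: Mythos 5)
Your proof is correct and takes essentially the same route as the paper, whose proof consists of the single assertion that $[\pi_P,\pi_P]=0$ follows ``by a direct computation using the Jacobi identities $[\pi_M,\pi_M]=2E_M\wedge\pi_M$ and $[E_M,\pi_M]=0$'' --- your rescaling identity $[f\tilde\pi,f\tilde\pi]=f^2[\tilde\pi,\tilde\pi]+2f\,[\tilde\pi,f]\wedge\tilde\pi$ together with the cancellation against $2e^{-2t}E_M\wedge\pi_M$ is exactly that computation carried out in full. Your closing remark on homogeneity is also well taken: with the paper's data ($Z=\partial/\partial t$, $h=e^{t}$) one indeed gets $L_Z\pi_P=-\pi_P$, so the field literally satisfying the paper's convention $L_{Z_P}\pi_P=\pi_P$ is $Z_P=-\partial/\partial t$, a sign the paper's proof (which never checks homogeneity) silently glosses over while elsewhere writing $Z_P=\partial/\partial t$.
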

\begin{proof}[Proof]
The proof consists in demonstrating that $\left[\pi_P,\pi_P\right]=0$, this is done by a direct computation using the Jacobi identities: $\left[\pi_M,\pi_M\right]=2E_M \wedge\pi_M$ and $\left[E_M,\pi_M\right]=~0.$
\end{proof}
\noindent
\begin{defi}
We call the poissonification of the Jacobi manifold $(M,\pi_M,E_M)$ the homogeneous Poisson manifold $(P,\pi_P,Z_P)$ constructed in Proposition \ref{Poissoni}
\end{defi}

\begin{lem}\label{mel}
Let $\mathbf{M}_{sing}$ be the singular locus of a Jacobi manifold $(M,\pi_M,E_M)$. The singular locus of the poissonification $(P,\pi_P,Z_P)$ is $\mathbf{M}_{sing}\times\Rr$.
\end{lem}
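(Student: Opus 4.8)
The plan is to compare the two singular loci through the top exterior powers of the respective structures. Since all the applications concern Jacobi manifolds that are contact on a dense open set, I take $\dim M = 2n+1$ and use the characterization from the Conventions: a point $m \in M$ lies in $\mathbf{M}_{sing}$ precisely when $(E_M \wedge \pi_M^n)|_m = 0$. Dually, since $\dim P = 2n+2$ and $\pi_P$ is symplectic on a dense open set, a point $(t,m) \in P$ is singular for the Poisson structure exactly when $\pi_P^{\,n+1}|_{(t,m)} = 0$. So the whole statement reduces to the fibrewise equivalence
\[
\pi_P^{\,n+1}|_{(t,m)} = 0 \iff (E_M \wedge \pi_M^n)|_m = 0 .
\]

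First I would compute $\pi_P^{\,n+1}$ directly from $\pi_P = \frac{1}{h}(\pi_M + Z\wedge E_M)$. Because $\pi_M$ and $Z\wedge E_M$ are bivectors, they commute in the exterior algebra of multivector fields, so the binomial expansion applies:
\[
\left(\pi_M + Z\wedge E_M\right)^{n+1} = \sum_{k=0}^{n+1}\binom{n+1}{k}\,\pi_M^{\,n+1-k}\wedge (Z\wedge E_M)^k .
\]
Two observations collapse this sum. Since $Z$ occurs twice in $(Z\wedge E_M)^2 = Z\wedge E_M\wedge Z\wedge E_M$, every term with $k\geq 2$ vanishes; and since $\pi_M$ is a bivector tangent to the $(2n+1)$-dimensional factor $M$, the top power $\pi_M^{\,n+1}$ is a $(2n+2)$-vector tangent to $M$, hence zero. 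Only the $k=1$ term survives, and after moving $Z$ past the even-degree factor $\pi_M^{\,n}$ (which costs no sign), this gives
\[
\pi_P^{\,n+1} = \frac{n+1}{h^{\,n+1}}\;\pi_M^{\,n}\wedge Z\wedge E_M = \frac{n+1}{h^{\,n+1}}\;Z\wedge\left(E_M\wedge \pi_M^{\,n}\right).
\]

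Then I would read off the equivalence. The factor $h = \exp(t)$ never vanishes, so $\pi_P^{\,n+1}|_{(t,m)} = 0$ iff $Z\wedge(E_M\wedge\pi_M^n)|_{(t,m)} = 0$. Now $E_M\wedge\pi_M^n$ is tangent to $M$, i.e.\ it lives in the one-dimensional space $\wedge^{2n+1}T_mM$, while $Z=\partial/\partial t$ spans the complementary line in the splitting $T_{(t,m)}P = \Rr\cdot Z \oplus T_mM$. Hence $Z\wedge(E_M\wedge\pi_M^n)$ is a multiple of the generator of the one-dimensional space $\wedge^{2n+2}T_{(t,m)}P$, and it vanishes iff $(E_M\wedge\pi_M^n)|_m = 0$. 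This is exactly the condition $m\in\mathbf{M}_{sing}$, which is independent of $t$, so the singular locus of $\pi_P$ is $\mathbf{M}_{sing}\times\Rr$.

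The binomial expansion and the sign bookkeeping are routine; the only point requiring care — and the one I would flag as the crux — is the clean separation of the $Z$-direction from the $M$-directions. It is precisely this transversality of $Z$ to $M$ that turns the vanishing of a $(2n+2)$-vector on $P$ into the vanishing of the $(2n+1)$-vector $E_M\wedge\pi_M^n$ on $M$, thereby identifying the two singular loci fibrewise over $\Rr$.
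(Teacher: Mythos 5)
Your proposal is correct and follows essentially the same route as the paper's proof: both expand $\pi_P^{\,k+1}$ binomially, note that only the term proportional to $\pi_M^{\,k}\wedge Z\wedge E_M$ survives (the paper phrases this for general $k$ and then takes $k$ the rank of $\pi_M$, while you specialize directly to $k=n$ using the dimension of $M$ to kill $\pi_M^{\,n+1}$), and conclude from the nonvanishing of $h$ and the transversality of $Z$ to $M$ that the vanishing locus is $\mathbf{M}_{sing}\times\Rr$. Your write-up is in fact more careful than the paper's on the crux step, making explicit that wedging with $Z$ is injective on $\wedge^{2n+1}T_mM$.
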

\begin{proof}[Proof]
Let us return to the context of the previous Lemma. According to the definition of the Poisson structure $\pi_P$ on the poissonification $(P,\pi_P,Z_P)$, the latter degenerates when the Jacobi structure $(\pi_M,E_M)$ degenerates. Indeed, we have for any positive integer ~$k$,
$$\pi_P^{k+1}=(k+1)\frac{\pi_M^k\wedge E_M\wedge Z_P}{h^{k+1}}+\frac{\pi_M^{k+1 }}{h^{k+1}}.$$

\noindent For $k$ is the rank of $\pi_M$, we have $\pi_M^{k+1}=0$ and we have $\pi_P^{k+1}=(k+1)\frac{ \pi_M^k\wedge E_M\wedge Z_P}{h^{k+1}}$. This ($2k+2$)-vector field is zero at $m\in M$ if and only if $m$ is a singular point of $\pi$ or if and only if $E_{M\mid_m}\in \Ima\pi_{M\mid_m}^ {\#}$. So $\mathbf{P}_{sing}=\mathbf{M}_{sing}\times\Rr$
\end{proof}
\noindent The following Proposition gives the construction of a symplectic manifold from a contact manifold.

\begin{prop}\label{symplec}

The poissonification of a contact manifold $(M,\pi_M,E)$ is an homogeneous symplectic manifold, denoted by $(P,\pi_P,Z_P)$.
\end{prop}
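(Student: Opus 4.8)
The plan is to verify directly that the bivector $\pi_P$ from Proposition~\ref{Poissoni} is everywhere non-degenerate on $P = \Rr \times M$, since a Poisson structure whose bivector has full (and even) rank at every point is exactly a symplectic structure. Since $(M,\pi_M,E)$ is a contact manifold of some odd dimension $2n+1$, the product manifold $P$ has even dimension $2n+2$, so non-degeneracy means showing $\pi_P^{n+1}$ never vanishes. Once this is established, the fact that $Z_P = \frac{\partial}{\partial t}$ is a homothety field with $[Z_P,\pi_P] = \pi_P$ follows from Proposition~\ref{Poissoni}, which already asserts $(P,\pi_P,Z_P)$ is a homogeneous Poisson manifold; so the only genuinely new content here is the symplectic (i.e. non-degeneracy) claim.

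First I would recall the top-power formula already proved in Lemma~\ref{mel}: for any positive integer $k$,
\begin{equation*}
\pi_P^{k+1} = (k+1)\frac{\pi_M^k \wedge E \wedge Z_P}{h^{k+1}} + \frac{\pi_M^{k+1}}{h^{k+1}}.
\end{equation*}
Taking $k = n$, where $2n+1 = \dim M$, the second term $\pi_M^{n+1}$ vanishes identically for degree reasons (it is a $(2n+2)$-vector on a $(2n+1)$-dimensional manifold), leaving
\begin{equation*}
\pi_P^{n+1} = (n+1)\frac{\pi_M^n \wedge E \wedge Z_P}{h^{n+1}}.
\end{equation*}
The key point is then that $(M,\pi_M,E)$ being a contact manifold means precisely, in the Jacobi-bivector description adopted in the Conventions, that the $(2n+1)$-vector $E \wedge \pi_M^n$ is nowhere zero on $M$.

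Next I would combine these observations. Since $E \wedge \pi_M^n$ is a nowhere-vanishing top-degree multivector on $M$ and $Z_P = \frac{\partial}{\partial t}$ is everywhere transverse to $M$ inside $P = \Rr \times M$, the wedge $\pi_M^n \wedge E \wedge Z_P$ is a nowhere-vanishing $(2n+2)$-vector on $P$; the factor $\tfrac{n+1}{h^{n+1}}$ is a nowhere-zero smooth function because $h = \exp(t) > 0$ everywhere. Hence $\pi_P^{n+1}$ is nowhere zero, so $\pi_P$ has full rank $2n+2$ at every point and is therefore the inverse of a symplectic form $\omega_P$. Combined with the homogeneity already furnished by Proposition~\ref{Poissoni}, this exhibits $(P,\pi_P,Z_P)$ as a homogeneous symplectic manifold.

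I do not expect a serious obstacle here; the proof is essentially a bookkeeping check that the leading power does not degenerate, and all the analytic ingredients are already in place. The one point requiring a little care is matching the sign and ordering conventions between the contact non-degeneracy condition $E_\Sigma \wedge \Pi_\Sigma^n \neq 0$ of the Conventions and the expression $\pi_M^n \wedge E \wedge Z_P$ appearing above, but these differ only by a reordering of wedge factors and thus only by a nonzero scalar, which does not affect non-vanishing. Accordingly, I would keep the proof brief, citing Lemma~\ref{mel} for the power formula and the Conventions for the contact characterization.
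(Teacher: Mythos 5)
Your proof is correct and takes essentially the same route as the paper's: both verify non-degeneracy by computing the top power $\pi_P^{n+1}$, reducing it to a nonzero multiple of $\pi_M^n\wedge E\wedge Z_P$, and invoking the contact condition that $E\wedge\pi_M^n$ never vanishes together with the non-vanishing of $Z_P$ and $h$. If anything, yours is slightly more careful, since you derive the top-power identity from the formula in Lemma~\ref{mel}, justify the vanishing of $\pi_M^{n+1}$ by degree, and keep the constant $(n+1)$ and the exponent $h^{n+1}$ that the paper's ``direct calculation'' drops.
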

\noindent In this case we call this operation "to symplectify" a contact manifold.

\begin{proof}[Proof]
The proof of this Proposition consists in proving that the Poisson structure on the poissonification manifold is non-degenerate.
Let $(P,\pi_P,Z_P)$ be an homogeneous Poisson manifold constructed from a Jacobi manifold $(M, \pi_M, E_M)$. By Proposition \ref{Poissoni}, the Poisson structure on $P$ is defined by

$$\pi_P=\frac{1}{h}\left(\pi_M+Z_P\wedge E_M \right),$$
where $Z_P=\frac{\partial}{\partial t}$ is the vector field associated with the canonical coordinate $t$ on
the factor $\Rr$, and $h :P\rightarrow \Rr$ is an homogeneous function of degree $1$ relatively to $Z$ defined by
$$ h(t,x)=\exp(t), \quad (t,x)\in P=\Rr\times M.$$

\noindent A direct calculation gives $\pi_P^{n+1}=\frac{1}{h^n}\pi_M^n\wedge Z_P\wedge E_M$. As $\frac{1}{h^n}\pi_M^n\wedge E_M$ is not zero at any point of $\wedge^{2n}TM$ and $Z_P=\frac{\partial}{\partial t}$ is not zero at any point, then $\pi_P$ is symplectic on $P$. This completes the proof.
 
\end{proof}

\noindent We are going to give here without demonstration a useful proposition to link symplectic resolutions and contact resolutions. For the proof see \cite{slvj1}

\begin{prop}\label{nown}
Let $(M,\pi_M,Z)$ be an homogeneous Poisson manifold, and $N$ a submanifold of codimension $1$ of $M$ transverse to the dilation field $Z$, then there exists on $N$ a Jacobi structure $(\pi_N,E_N)$. This Jacobi structure on $N$ is said to be induced by the homogeneous Poisson structure of $M$.
It verifies $$\pi_M|_n=\pi_N|_n+Z\wedge E_N|_n$$
for any point $n \in N$.
If $(M,\pi_M,Z)$ is symplectic and $Z$ never vanishes, then the induced Jacobi structure is contact.
\end{prop}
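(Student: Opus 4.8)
The plan is to recognize Proposition \ref{nown} as the pointwise inverse of the Poissonification of Proposition \ref{Poissoni}, and to carry it out in three steps: an algebraic splitting, a Schouten-bracket computation, and a top-power count for the contact case.

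First I would use the transversality of $N$ to $Z$. At each point $n \in N$ the hypothesis $Z_n \notin T_nN$ gives a direct sum $T_nM = T_nN \oplus \Rr\, Z_n$, whence $\wedge^2 T_nM = \wedge^2 T_nN \oplus (Z_n \wedge T_nN)$, the summand $\wedge^2(\Rr Z_n)$ being zero. Projecting $\pi_M|_n$ onto these two summands defines $\pi_N|_n \in \wedge^2 T_nN$ and $E_N|_n \in T_nN$, which is exactly the asserted decomposition $\pi_M|_n = \pi_N|_n + Z\wedge E_N|_n$. Smoothness of $\pi_N$ and $E_N$ follows from smoothness of $\pi_M$ and of the splitting, the latter being smooth because $Z$ is a nowhere-tangent smooth field along $N$.

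The main step is to verify that $(\pi_N,E_N)$ is a Jacobi structure, i.e. $[\pi_N,\pi_N] = 2 E_N\wedge\pi_N$ and $[E_N,\pi_N]=0$. Here I would exploit that $Z$ is complete and transverse, so its flow identifies a neighborhood $U$ of $N$ in $M$ with $N\times I$, with $Z=\frac{\partial}{\partial s}$ and $N=\{s=0\}$; setting $h=\exp(s)$ gives $Z[h]=h$. Extending $\pi_N,E_N$ to the $Z$-invariant multivector fields $\bar\pi_N,\bar E_N$ tangent to the slices (so $L_Z\bar\pi_N = L_Z\bar E_N = 0$), the homogeneity of $\pi_M$ forces $\pi_M=\frac{1}{h}\left(\bar\pi_N+Z\wedge\bar E_N\right)$ on $U$, precisely the normalization of Proposition \ref{Poissoni}. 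Now I expand $[\pi_M,\pi_M]=0$ with the graded Leibniz rule for the Schouten bracket, using $[Z,\bar\pi_N]=[Z,\bar E_N]=0$ and the $\iota_{dh}$-terms coming from the factor $1/h$. Collecting the result by its $Z$-content — the part tangent to the slices versus the part containing one factor of $Z$ — yields exactly $[\bar\pi_N,\bar\pi_N]=2\bar E_N\wedge\bar\pi_N$ and $[\bar E_N,\bar\pi_N]=0$; restricting to $s=0$ gives the two Jacobi identities on $N$. This is simply the computation proving Proposition \ref{Poissoni} read in reverse, and it is the step where the bookkeeping of the $1/h$ factors and of the weight grading induced by $Z$ is the real obstacle.

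For the contact case, suppose $(M,\pi_M)$ is symplectic with $\dim M = 2m+2$ and $Z$ nowhere zero, so that $\pi_M^{m+1}$ is nowhere zero. Using $\pi_M=\frac{1}{h}\left(\bar\pi_N+Z\wedge\bar E_N\right)$ together with $(Z\wedge\bar E_N)^2=0$ and $\bar\pi_N^{m+1}=0$ (since $\bar\pi_N$ is tangent to the $(2m+1)$-dimensional slices), I obtain $\pi_M^{m+1}=\frac{m+1}{h^{m+1}}\,Z\wedge\bar E_N\wedge\bar\pi_N^m$. Restricting to $N$ and using $Z\neq 0$, the non-vanishing of the left-hand side forces $E_N\wedge\pi_N^m\neq 0$ at every point of $N$, which is precisely the non-degeneracy condition defining a contact structure in the Conventions. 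This completes the plan.
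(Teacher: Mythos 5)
Your argument is correct, but it cannot be ``the same approach as the paper'' for a simple reason: the paper gives no proof of Proposition \ref{nown} at all, stating it explicitly without demonstration and referring to \cite{slvj1} (Dazord--Lichnerowicz--Marle). What you supply is a self-contained version of that classical argument, and all three of your steps are sound: the pointwise splitting $T_nM=T_nN\oplus\Rr Z_n$ (note that transversality of a codimension-$1$ submanifold to $Z$ already forces $Z_n\neq 0$ for $n\in N$, so the statement's hypothesis that $Z$ never vanishes is automatic along $N$); the flow-box normalization $\pi_M=\frac{1}{h}\left(\bar\pi_N+Z\wedge\bar E_N\right)$, after which sorting $[\pi_M,\pi_M]=0$ by $Z$-content gives the two Jacobi identities --- legitimately, since $\wedge^3T_nM=\wedge^3T_nN\oplus\left(Z_n\wedge\wedge^2T_nN\right)$ is a direct sum, so the slice-tangent part and the $Z$-part must vanish separately; and the top-power count $\pi_M^{m+1}=\frac{m+1}{h^{m+1}}\,Z\wedge\bar E_N\wedge\bar\pi_N^m$, which is exactly the computation in Proposition \ref{symplec} run backwards and yields $E_N\wedge\pi_N^m\neq 0$, i.e.\ contact in the paper's convention. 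Two points deserve explicit care in a final write-up. First, the flow of $Z$ gives only a local diffeomorphism near $N\times\{0\}$, not necessarily a product neighborhood $N\times I$ with uniform $I$; since everything you need is local along $N$, you should say you work in a flow-box chart around each $n\in N$ rather than asserting a global identification. Second, a sign: with the paper's stated homogeneity convention $L_{Z}\pi_M=\pi_M$, your normalization with $Z[h]=h$ comes out as $L_Z\pi_M=-\pi_M$; your choice matches the paper's own Poissonification $\pi_P=\frac{1}{h}\left(\pi_M+Z\wedge E_M\right)$, which also satisfies $L_{Z_P}\pi_P=-\pi_P$, so the inconsistency lies in the paper's definition, not in your proof --- but it is worth flagging, since with the definition taken literally one must instead take $h=\exp(-s)$. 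Finally, the one step you leave as an assertion (that expanding the Schouten bracket and collecting terms yields $[\bar\pi_N,\bar\pi_N]=2\bar E_N\wedge\bar\pi_N$ and $[\bar E_N,\bar\pi_N]=0$) is indeed the standard equivalence underlying Proposition \ref{Poissoni}; since the paper proves that proposition only by ``direct computation'' as well, sketching it at the same level of detail is defensible, though writing out the $\iota_{dh}$ bookkeeping once would make the proof fully self-contained.
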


\subsection{Non-existence of symplectic resolutions}

We recall here the main result of \cite{articl}, obtained by merging Theorems 2.21, 2.25 and 2.26.

\begin{thm}\cite{articl}
Let $(M,\pi)$ be a symplectic Poisson manifold over a dense open set. Suppose there is a symplectic resolution $(\Sigma, \Pi_\Sigma, \phi_\Sigma)$
\begin{enumerate}
\item In the holomorphic case, the Poisson structure $(M,\pi) $ must itself be a symplectic manifold.
\item In the smooth (real analytic) case, if $\phi_\Sigma $
  is proper (resp, semi-connected), then the singular locus of $\pi $ cannot contain a submanifold of codimension $1$. \end{enumerate}
\end{thm}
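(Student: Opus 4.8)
The plan is to use the rigidity of a symplectic resolution over the regular locus together with the fact that the symplectic volume diverges along any codimension-$1$ piece of the singular set. Write $M_{\mathrm{reg}}$ for the dense open set where $\pi$ is nondegenerate, $\omega=\pi^{-1}$ for the symplectic form there, and $\omega_\Sigma=\Pi_\Sigma^{-1}$ for the symplectic form of $\Sigma$. Since $\phi_\Sigma$ is a Poisson map with $\dim\Sigma=\dim M=2n$, the identity $\wedge^2(d\phi_\Sigma)_\sigma(\Pi_\Sigma|_\sigma)=\pi|_{\phi_\Sigma(\sigma)}$ forces $(d\phi_\Sigma)_\sigma$ to be invertible precisely when $\phi_\Sigma(\sigma)\in M_{\mathrm{reg}}$; hence over $\phi_\Sigma^{-1}(M_{\mathrm{reg}})$ the map is a local diffeomorphism, and being Poisson between symplectic manifolds it is a local symplectomorphism, so $\phi_\Sigma^*\omega=\omega_\Sigma$ there. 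Fixing a local volume $\Lambda$ and writing $\pi^{n}=P\,\Lambda$, one has $\mathbf{M}_{sing}=\{P=0\}$ and $\omega^{n}/n!$ equals $1/P$ times a nonvanishing form. The single analytic input I rely on is that along a codimension-$1$ submanifold $N\subseteq\mathbf{M}_{sing}$ the function $1/P$ is non-integrable transversally to $N$, so every neighbourhood $U$ of a point of $N$ satisfies $\mathrm{vol}_\omega(U\cap M_{\mathrm{reg}})=+\infty$.

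For the proper case I would argue by volume. Choose a point of $N$ and a compact coordinate ball $K$ around it. Properness makes $\phi_\Sigma^{-1}(K)$ compact, hence of finite symplectic volume. But $\phi_\Sigma$ is surjective and restricts to a local symplectomorphism over $K\cap M_{\mathrm{reg}}$, so every point there is covered with multiplicity at least one and $\mathrm{vol}_{\omega_\Sigma}(\phi_\Sigma^{-1}(K\cap M_{\mathrm{reg}}))\geq\mathrm{vol}_\omega(K\cap M_{\mathrm{reg}})=+\infty$; since $\phi_\Sigma^{-1}(K\cap M_{\mathrm{reg}})\subseteq\phi_\Sigma^{-1}(K)$ this is absurd, so $\mathbf{M}_{sing}$ can contain no codimension-$1$ submanifold.

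For the holomorphic case I would replace integration by a residue obstruction, which needs no properness. If $M$ is not symplectic then $P\not\equiv0$ yet vanishes somewhere, so by the Weierstrass preparation theorem its zero set is a hypersurface with a smooth point $m_0$, near which $\mathbf{M}_{sing}=N=\{x_1=0\}$ in suitable coordinates and $\omega=\tfrac{dx_1}{x_1}\wedge\beta+(\text{holomorphic})$ has a logarithmic pole with nonzero Poincaré residue $\beta|_N$ (nonzero because $\omega$ is nondegenerate). The pullback $\phi_\Sigma^*\omega$ equals the globally holomorphic form $\omega_\Sigma$ and so has no poles; but if $\phi_\Sigma^*x_1$ vanishes to order $b\geq1$ along a component of the hypersurface $\phi_\Sigma^{-1}(N)$ then the residue multiplies by $b$, giving $\mathrm{Res}(\phi_\Sigma^*\omega)=b\,\phi_\Sigma^*(\beta|_N)\neq0$, which a holomorphic form cannot have. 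Hence $P$ never vanishes and $M$ is symplectic.

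The semi-connected real-analytic case is where I expect the real work to lie, since compactness is gone and the clean volume computation is unavailable. My plan is to complexify: a real-analytic resolution extends to a holomorphic one on a neighbourhood of $M$ in a complexification, where the codimension-$1$ hypersurface $N$ persists and the residue obstruction of the previous paragraph applies; semi-connectedness is precisely what ensures that a component of the complexified source genuinely meets the regular locus, so that the local-biholomorphism picture near $N$ is non-trivial. The delicate points are that the residue obstruction survives complexification, that semi-connectedness transfers to it, and that one may localize near a generic point of $N$ without any global properness. Making these precise — or, as an alternative, replacing compactness by a \L{}ojasiewicz-type length estimate that forces a transversal lift to leave every compact set while the base path has finite length — is the main obstacle.
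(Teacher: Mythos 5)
First, a point of comparison: the paper never proves this statement — it is recalled from \cite{articl} (``obtained by merging Theorems 2.21, 2.25 and 2.26''), so your attempt can only be judged on its own merits. On those merits, your treatment of the proper smooth case is essentially right and is the natural argument: over $\phi_\Sigma^{-1}(M_{\mathrm{reg}})$ the equal-dimension Poisson condition does force a local symplectomorphism, properness gives finite Liouville volume of $\phi_\Sigma^{-1}(K)$, and surjectivity gives multiplicity at least one, so everything reduces to $\mathrm{vol}_\omega(K\cap M_{\mathrm{reg}})=+\infty$. That last claim — your ``single analytic input'' — is true but is asserted rather than proved, and in the smooth category it genuinely needs an argument: the set $\{P=0\}$ can have positive measure, and the integral of $1/|P|$ is taken only over $\{P\neq 0\}$, so a naive transversal estimate does not immediately apply. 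It can be repaired (since $P$ is $C^1$ and vanishes at the first zero $r$ along any transversal segment, $|P(x)|\leq C\,|x-r|$ there, so each transversal slice of $\{P\neq0\}$ already carries a divergent one-dimensional integral, and Fubini over a tube concludes); note this repair uses codimension $1$ essentially — for a codimension-$2$ zero set the volume is finite, consistent with the paper's example $(x^2+y^2)\partial_x\wedge\partial_y$ admitting a resolution.

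The genuine gap is in the holomorphic case, and it propagates to your plan for the semi-connected real-analytic case. Your residue computation $\mathrm{Res}(\phi_\Sigma^*\omega)=b\,\phi_\Sigma^*(\beta|_N)$ does not yield a contradiction, because $\phi_\Sigma$ restricted to the exceptional hypersurface $H\subseteq\phi_\Sigma^{-1}(N)$ may contract $H$ (into $N$, even to a point), in which case $\phi_\Sigma^*(\beta|_N)$ vanishes identically on $H$ and the pole cancels; your ``$\neq 0$'' tacitly assumes $\phi_\Sigma|_H$ is generically nondegenerate onto $N$, which is exactly what fails in the resolution scenario. Concretely, take $M=\Cc^2$, $\pi=x\,\partial_x\wedge\partial_y$, $\omega=\frac{dx}{x}\wedge dy$ with nonzero residue $dy|_N$ along $N=\{x=0\}$, and $\phi(u,v)=(u,uv)$ from the standard symplectic $\Cc^2$: one checks $\{\phi^*x,\phi^*y\}=u=\phi^*\{x,y\}$, so $\phi$ is Poisson, yet $\phi^*\omega=du\wedge dv$ is holomorphic — the log pole disappears because $\{u=0\}$ is collapsed to the origin. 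This $\phi$ is not surjective (it misses $\{x=0,\,y\neq0\}$), which shows the obstruction cannot be local: any correct proof must use surjectivity (or semi-connectedness) globally, and your argument never does. Two further soft spots: $\omega$ need not have only logarithmic poles ($P$ may vanish to higher order along $N$, so the Poincaré-residue setup itself needs justification), and the semi-connected real-analytic case you explicitly leave open, with your proposed complexification feeding into the same flawed residue mechanism. So: the proper half of item (2) is correct modulo a provable lemma, but item (1) and the semi-connected half of item (2) are not established.
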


\end{document}